\newtheorem{theorem}{Theorem}[section]
\newtheorem{prop}[theorem]{Proposition}
\theoremstyle{definition}
\newtheorem{example}[theorem]{Example}
\newtheorem{remark}[theorem]{Remark}
\newtheorem{question}[theorem]{Question}
\definecolor{palestinianGreen}{RGB}{0,128,0}
\definecolor{palestinianRed}{RGB}{206,17,38}
\title{On Lagrangian barriers and applications of Non-Squeezing}
\author{Alejandro Vicente}
\date{}
\begin{document}

\maketitle

\begin{abstract}
    In this note we show the existence of Lagrangian barriers in a certain class of domains in $\mathbb{R}^{2n}$, including dual Lagrangian products and some ``sufficiently" round domains. Many of these results come as applications of the Non-Squeezing Theorem. We also give a new interesting application of the Non-Squeezing Theorem and the Symplectic Camel Theorem. 
\end{abstract}

\section{Introduction}\label{sec: intro}

Symplectic embedding problems have guided the development of Symplectic Topology since the mid-1980s. See \cite{Schlenk2017SymplecticEP}, for a thorough account of the history and advances on this topic. Many different strange phenomena arising in symplectic embedding problems have put this subject right at the core of the area. These include the staircases first discovered in \cite{Mcduff2009TheEC}, the Lagrangian barriers uncovered in \cite{Biran2001LagrangianBA}, and the more recent symplectic barriers found in \cite{HaimKislev2023OnTE}. In this paper we present new examples of symplectic manifolds where the Lagrangian barrier phenomenon is present.

Let 
\begin{equation*}
    \begin{split}
        B^{2n}(r)&:=\{(x_1,\ldots,x_n,y_1,\ldots,y_n)\in \mathbb{R}^{2n}\mid x_1^2+\ldots x_n^2+y_1^2+\ldots y_n^2\leq r/\pi\},\\
        Z^{2n}(r)&:=B^2(r)\times \mathbb{R}^{2n-2}.
    \end{split}
    \end{equation*}

The first examples of Lagrangian barriers were discovered by Biran in \cite{Biran2001LagrangianBA}. Perhaps the most famous example of this phenomenon is the result in \cite{Biran2001LagrangianBA} saying that for any symplectic embedding $\phi:B^{2n}(\lambda)\mapsto\mathbb{CP}^n$ and $\lambda\geq 1/2$, the image of $\phi$ intersects the Lagrangian submanifold $\mathbb{RP}^n\subset \mathbb{CP}^n$. From the ideas of the original proof of this result, a similar statement can be derived: for any symplectic embedding $\phi:B^{2n}(\lambda)\mapsto B^{2n}(1)$ and $\lambda\geq 1/2$, the image of $\phi$ intersects the Lagrangian disk $\mathcal{L}:=\{(\mathbf{x},\mathbf{y})\in T^*\mathbb{R}^n\mid \mathbf{y}=0, ||\mathbf{x}||\leq 1\}$. Another proof of the latter result was given recently in \cite{Sackel2021OnCQ}. This result hints at the possibility of the existence of a wealth of regions in $\mathbb{R}^{2n}$ admitting Lagrangian barriers. In this paper, we present some results in this direction, but first, we set up some notation.

A \textit{symplectic capacity} $c$ is a map that assigns a non-negative real number or infinity, to a symplectic manifold $(M,\omega)$ such that the following conditions are met:
\begin{itemize}
\item (Monotonicity) If $(M_1,\omega_1)$ symplectically embeds into $(M_2,\omega_2)$, then $c(M_1,\omega_1)\leq c(M_2,\omega_2)$,
\item (Conformality) For $\lambda\neq 0$, we have that $c(M,\lambda \omega)=|\lambda|c(M,\omega)$,
\item (Non-triviality) $c(B^{2n}(1),\omega_0)>0$ and $c(Z^{2n}(1),\omega_0)<\infty$.
\end{itemize}
In addition, we say that a symplectic capacity is \textit{normalized} if $c(B^{2n}(1),\omega_0)=c(Z^{2n}(1),\omega_0)=1$.
    
The most classical example of a (normalized) symplectic capacity is the Gromov width. Given a $2n$-dimensional symplectic manifold $(X,\omega)$, its Gromov width $c_{Gr}(X,\omega)$ is defined to be the supremum for all $r > 0$ such that there exists a symplectic embedding $(B^{2n}(r),\omega_0) \rightarrow (X,\omega)$, where $\omega_0$ is the standard symplectic form in $\mathbb{R}^{2n}$. Notice that Biran's above mentioned results can now be stated as $c_{Gr}(\mathbb{CP}^n\setminus \mathbb{RP}^n)=1/2$ and $c_{Gr}(B^{2n}\setminus \mathcal{L})=1/2$. 

The motivating question for this work is the following: 
\begin{question}\label{quest: capacities}
Let $X\subset \mathbb{R}^{2n}$ be a compact domain and let $\mathcal{L}_X:=X\cap \{(\mathbf{x},\mathbf{y})\in T^*\mathbb{R}^n\mid \mathbf{x}=0\}$. Let $c$ be any normalized symplectic capacity, is it true that 
$$c(X\setminus\mathcal{L}_X)<c(X),$$
holds? In the case where this is true, can we compute $c(X\setminus\mathcal{L}_X)$?
\end{question} 
We will see in Theorems \ref{thm: puncture_higherdim}, \ref{thm: punctured_cube} and \ref{thm: estimative_2} that in many cases the answer to the first part of this question is true. Notice that we are just removing (the intersection of a domain with) a Lagrangian plane and this already has the effect of decreasing symplectic capacities significantly so, motivated by the results in \cite{HaimKislev2023OnTE} we could consider a version of Question \ref{quest: capacities} reloaded.

\begin{question}
Let $X\subset \mathbb{R}^{2n}$ be a compact domain. For $\mathbf{x}_0\in \mathbb{R}^n$ fixed, let $\mathcal{L}_X^{\mathbf{x}_0}:=X\cap \{(\mathbf{x},\mathbf{y})\in T^*\mathbb{R}^n\mid \mathbf{x}=\mathbf{x}_0\}$. Let $c$ be any normalized symplectic capacity and $\varepsilon>0$, does there exist points $\mathbf{x}_0,\ldots,\mathbf{x}_k\in \mathbb{R}^n$ such that 
$$c\left(X\setminus(\mathcal{L}_X^{\mathbf{x}_0}\cup\ldots\cup\mathcal{L}_X^{\mathbf{x}_k})\right)<\varepsilon,$$
holds?
\end{question}

For a symplectic manifold $(X,\omega)$ with contact boundary, denote by $\mathcal{A}_{min}(\partial X)$ the minimal action of Reeb orbits on $\partial X$. It is a classical result that for convex domains in $\mathbb{R}^{2n}$, the Hofer-Zehnder capacity agrees with the minimal action. In this note, we also analyze the effect that removing Lagrangian planes has on the minimal action. 

We will see some cases where all normalized symplectic capacities in $X\setminus \mathcal{L}$ agree with the minimal action of Reeb orbits in $\partial(X\setminus \mathcal{L}_X)$, i.e. $c\left(X\setminus\mathcal{L}_X\right)=\mathcal{A}_{min}(\partial (X\setminus\mathcal{L}_X))$. In particular, it is interesting to notice that when removing more than one Lagrangian plane from a region $X\subset \mathbb{R}^{2n}$, it is not necessarily true anymore that any normalized symplectic capacity $c$ agrees with the minimal action, i.e. the equality
$$c\left(X\setminus(\mathcal{L}_X^{\mathbf{x}_0}\cup\ldots\cup\mathcal{L}_X^{\mathbf{x}_k})\right)=\mathcal{A}_{min}(\partial (X\setminus(\mathcal{L}_X^{\mathbf{x}_0}\cup\ldots\cup\mathcal{L}_X^{\mathbf{x}_k}))),$$
does not necessarily holds.\\
To see this, take the points $\mathbf{x}_0=((k-1)/k,0)$ and $\mathbf{x}_1=((1-k)/k,0)$, in $\mathbb{R}^2$,for $k$ big enough and let $X:=D^2(1)\times_L D^2(1)$. Then we can easily see that $\mathcal{A}_{min}(\partial (X\setminus(\mathcal{L}_{X}^{\mathbf{x}_0}\cup\mathcal{L}_{X}^{\mathbf{x}_1})))=2/k$, while $c(X\setminus(\mathcal{L}_{X}^{\mathbf{x}_0}\cup\mathcal{L}_{X}^{\mathbf{x}_1}))\geq\sqrt{(k-1)/k}$. The minimal action in this case is realized by the 2-bouncing billiard trajectory between the points $\mathbf{x}_0$ and $(1,0)$.

Before we present our main theorems, we recall some  results on the symplectic measurements of two of our main objects: the Lagrangian bidisk and the Lagrangian product of a cube in $\mathbb{R}^n$ and its dual polar body.

Let 
\begin{equation*}
    \begin{split}
        D^n(r)&:=\{(x_1,\ldots,x_n)\in \mathbb{R}^n\mid x_1^2+\ldots x_n^2\leq r^2\},\\
        \square^n(a_1,\ldots,a_n)&:=\{(x_1,\ldots,x_n)\in \mathbb{R}^n\mid |x_i|<a_1\},\\
        \diamond^n(r)&: =\{(y_1,\ldots,y_n)\in \mathbb{R}^n\mid |y_1|+\ldots+|y_n|<r\}.   
    \end{split}
    \end{equation*} 
    
For $A\subset \mathbb{R}^n_{(x_1,\ldots,x_n)}$ and $B\subset \mathbb{R}^n_{(y_1,\ldots,y_n)}$, we define the \textit{Lagrangian product} $A\times_L B$ as the usual cartesian product, where $A\times_L B$ is regarded as a subset of $\mathbb{R}^{2n}\cong \mathbb{R}^n\times_L\mathbb{R}^n$, with the symplectic form $\sum_{i=1}^{n}dy_i\wedge dx_i$. In particular, notice that for any $A\subset \mathbb{R}^n$, we have that $A\times_L D^n(1)$ is the disk cotangent bundle of $A$.

\begin{example}\label{ex: D^2xD^2}
    Let $A=D^n(1)$ and $B=:A^{\circ}=D^n(1)$ its dual body. It follows from an idea of Y. Ostrover, outlined in \cite{Schlenk2017SymplecticEP}, that for any normalized symplectic capacity $c$, we have that $c(A\times_L B)=4$. 
\end{example}

\begin{example}\label{ex: cube_x_diamond}
    Let $A:=\square^n(1,\ldots,1)$ be a cube and $B:=A^{\circ}=\diamond^n(1)$, its dual body . We have that $A\times_L B$ is symplectomorphic to $B^{2n}(4)$, by the main result in \cite{Ramos2017OnTR} and so $c(A\times_L B)=4$.
\end{example}

%
%
%
%
%

We now state our two main results.

\begin{theorem}\label{thm: puncture_higherdim}
Let $c$ be any normalized symplectic capacity, then the disk cotangent bundle $D^n(1)\setminus D^n(\delta)\times_L D^n(1)$ of the annulus $D^n(1)\setminus D^n(\delta)$ satisfies
$$c(D^n(1)\setminus D^n(\delta)\times_L D^n(1))=2(1-\delta).$$
Furthermore, $\mathcal{A}_{min}(D^n(1)\setminus D^n(\delta)\times_L D^n(1))=2(1-\delta)$.
\end{theorem}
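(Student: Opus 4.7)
Let $X := (D^n(1)\setminus D^n(\delta))\times_L D^n(1)$ and $A := D^n(1)\setminus D^n(\delta)$. The plan is to exploit the universal sandwich $c_{Gr}(X) \leq c(X) \leq c_Z(X)$ valid for any normalized symplectic capacity $c$, and pin both the Gromov width and the cylindrical capacity to $2(1-\delta)$. For the assertion about $\mathcal{A}_{\min}$, the boundary of $X$ splits, away from its codimension-two corner locus, into the two smooth pieces $A\times_L \partial D^n(1)$ and $\partial A\times_L D^n(1)$. A direct computation shows that the characteristic foliation is tangent to Euclidean geodesics in $A$ on the first piece and to pure fiber directions implementing billiard reflection on the second. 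Closed characteristics on $\partial X$ therefore correspond to closed billiard trajectories in $A$, and a short minimization argument identifies the shortest such as the radial $2$-bouncer joining $\partial D^n(\delta)$ to the antipodal point of $\partial D^n(1)$ and back, of length $2(1-\delta)$.

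For the lower bound $c_{Gr}(X) \geq 2(1-\delta)$, I would construct a symplectic embedding $B^{2n}(2(1-\delta)) \hookrightarrow X$. Localizing to the radial slab $S := \{\mathbf{x}\in\mathbb{R}^n : \delta \leq x_1 \leq 1,\ |x_j|<\varepsilon\text{ for } j\geq 2\} \subset A$, one has $S\times_L D^n(1) \subset X$, and the projection to the $(x_1,y_1)$-plane realizes the full rectangle $[\delta,1]\times [-1,1]$ of area $2(1-\delta)$. A Traynor-style folding of $B^{2n}(2(1-\delta))$ along this rectangle, using the transverse directions $(x_j,y_j)_{j\geq 2}$ for depth, yields the required embedding.

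For the upper bound $c_Z(X) \leq 2(1-\delta)$, I would build an embedding $X\hookrightarrow Z^{2n}(2(1-\delta)) = B^2(2(1-\delta))\times \mathbb{R}^{2n-2}$. Using symplectic polar coordinates on $\mathbb{R}^{2n}\setminus\{\mathbf{x}=0\} \cong T^*(0,\infty) \times T^*S^{n-1}$, one sees that $X$ sits inside the product of the box $[\delta,1]\times[-1,1]$ in the radial $(r,p_r)$-factor and the unit cotangent disk bundle of $S^{n-1}$ in the angular factor. The box embeds area-preservingly into $B^2(2(1-\delta))$, and for $n=2$ the angular factor is a planar cylinder that embeds into $\mathbb{R}^2$ as an annulus. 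The \emph{main obstacle} is the case $n\geq 3$, in which the angular factor does not admit a direct symplectic embedding into $\mathbb{R}^{2n-2}$ (it contains an exact Lagrangian $S^{n-1}$). To circumvent this, I would cover $S^{n-1}$ by finitely many open charts and symplectically translate the corresponding pieces of $X$ disjointly within the unbounded $\mathbb{R}^{2n-2}$-factor of $Z^{2n}(2(1-\delta))$; organizing these shifts into a genuine symplectic embedding is the key technical step of the argument.
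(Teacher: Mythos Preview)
Your proposal has two genuine gaps, one in each bound.

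\textbf{Lower bound.} The thin slab $S=\{\delta\le x_1\le 1,\ |x_j|<\varepsilon\ (j\ge2)\}$ cannot carry a ball of capacity $2(1-\delta)$. The projection of $S\times_L D^n(1)$ to the symplectic $(x_2,y_2)$-plane lands in a rectangle of area at most $4\varepsilon$, so by Non-Squeezing $c_{Gr}(S\times_L D^n(1))\le 4\varepsilon$; no amount of Traynor folding can beat this. The paper instead observes that the affine translate $\tfrac{1+\delta}{2}e_1+\tfrac{1-\delta}{2}D^n(1)$ sits entirely inside the annulus $D^n(1)\setminus D^n(\delta)$ (a one-line convexity check), so that $\tfrac{1-\delta}{2}D^n(1)\times_L D^n(1)\cong \sqrt{\tfrac{1-\delta}{2}}\,(D^n(1)\times_L D^n(1))$ embeds into $X$, and Ostrover's $c(D^n(1)\times_L D^n(1))=4$ gives $c(X)\ge 2(1-\delta)$ directly.

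\textbf{Upper bound for $n\ge 3$.} Covering $S^{n-1}$ by charts and translating the pieces disjointly in $\mathbb{R}^{2n-2}$ does not produce an embedding of $X$: it produces an embedding of a disjoint union of open pieces of $X$, and the cylindrical capacity of such a union says nothing about $c_Z(X)$. The obstruction you identify (the exact Lagrangian $S^{n-1}$ in $D^*S^{n-1}$) is real and cannot be circumvented this way. The paper's trick is to take polar coordinates \emph{only in the $(x_1,x_2)$-plane}, leaving $(x_3,\ldots,x_n,y_3,\ldots,y_n)$ Cartesian. The map
\[
(r,\theta,x_3,\ldots,x_n,p_r,p_\theta,y_3,\ldots,y_n)\ \longmapsto\ \Bigl(1+p_\theta,\ -\tfrac{\theta}{1+p_\theta},\ r,\ p_r,\ x_3,y_3,\ldots,x_n,y_n\Bigr)
\]
then embeds $X$ (off the codimension-two locus $x_1=x_2=0$) into $(B^2(4)\setminus\{0\})\times\{\delta\le r<1,\ |p_r|<1\}\times\mathbb{R}^{2n-4}$, and it is the middle rectangle, of area $2(1-\delta)$, that one feeds into Non-Squeezing. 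This completely sidesteps the $T^*S^{n-1}$ problem.

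Your minimal-action discussion via billiards and the radial $2$-bouncer matches the paper's.
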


\begin{theorem}\label{thm: punctured_cube}
Let $c$ be any normalized symplectic capacity, then the Lagrangian product of $ \diamond^n(1)$  and $\square^n(1,\dots,1)\setminus\{(0,\ldots,0)\}$ satisfies
$$c(\square^n(1,\dots,1)\setminus\{(0,\ldots,0)\}\times_L \diamond^n(1))=2.$$
Furthermore, $\mathcal{A}_{min}(\square^n(1,\dots,1)\setminus\{(0,\ldots,0)\}\times_L \diamond^n(1))=2$.
\end{theorem}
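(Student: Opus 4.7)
The plan is to reduce the capacity computation to a Biran-type complement statement via Ramos's symplectomorphism of Example \ref{ex: cube_x_diamond}, and to compute the minimal action directly via Minkowski billiards. Throughout, denote $Y := \left(\square^n(1,\ldots,1)\setminus\{\mathbf{0}\}\right)\times_L\diamond^n(1)$.

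For the upper bound, I would begin with Ramos's symplectomorphism $\Phi:\square^n(1,\ldots,1)\times_L\diamond^n(1)\xrightarrow{\cong} B^{2n}(4)$ from Example \ref{ex: cube_x_diamond}. The slice $\{\mathbf{0}\}\times_L\diamond^n(1)$ is a Lagrangian disk, and its image $\mathcal{L}:=\Phi(\{\mathbf{0}\}\times_L\diamond^n(1))\subset B^{2n}(4)$ is again Lagrangian. Using the $\mathbb{Z}_2^n$-symmetry $(x_i,y_i)\mapsto(-x_i,y_i)$, which preserves both sides of $\Phi$ and can be arranged to be intertwined by it, I would identify $\mathcal{L}$ with the standard Lagrangian disk $B^{2n}(4)\cap\{\mathbf{x}=0\}$. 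An appropriately scaled version of Biran's barrier theorem --- itself an application of Non-Squeezing --- then provides a symplectic embedding $B^{2n}(4)\setminus\mathcal{L}\hookrightarrow Z^{2n}(2)$, whence $c(Y)\leq c_Z(Y)\leq 2$ for any normalized symplectic capacity $c$.

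For the lower bound, I would pull back Biran's ball embedding $B^{2n}(2)\hookrightarrow B^{2n}(4)\setminus\mathcal{L}$ by $\Phi^{-1}$ to obtain $B^{2n}(2)\hookrightarrow Y$, yielding $c(Y)\geq c_{Gr}(Y)\geq 2$, and hence $c(Y)=2$. For the minimal action, I would use the correspondence between Reeb orbits on $\partial(\square^n(1,\ldots,1)\times_L\diamond^n(1))$ and closed Minkowski billiards in $\square^n$, whose length functional is $\sum h_{\diamond^n}(q_{i+1}-q_i)=\sum\|q_{i+1}-q_i\|_\infty$ since the support function of $\diamond^n(1)$ is the $\ell_\infty$-norm. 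After puncturing one additionally admits open billiards starting and ending at $\mathbf{0}$ with bounces on $\partial\square^n$; the shortest such is a single-bounce chord $\mathbf{0}\to p\to \mathbf{0}$ with $p$ on a face $\{x_i=\pm 1\}$, of length $2\|p\|_\infty=2$. Since the minimum length of unpunctured closed billiards is $4$, this gives $\mathcal{A}_{min}(\partial Y)=2$.

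The main obstacle will be pinning down the image Lagrangian $\mathcal{L}$ precisely enough to invoke Biran's theorem. Either this is done by verifying that Ramos's construction in \cite{Ramos2017OnTR} respects the relevant $\mathbb{Z}_2^n$-involutions, or one bypasses $\Phi$ entirely by directly engineering the embeddings $B^{2n}(2)\hookrightarrow Y\hookrightarrow Z^{2n}(2)$ adapted to the polytopal structure of the cube-diamond product. The latter approach would be conceptually cleaner and more self-contained, but demands explicit symplectic maps exploiting the polytopal geometry of $\square^n\times_L\diamond^n$.
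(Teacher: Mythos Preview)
Your strategy of transporting the problem to $B^{2n}(4)$ via Ramos's symplectomorphism $\Phi$ is natural, but the step you yourself flag as ``the main obstacle'' is a genuine gap that you have not closed. Two points:

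\textbf{The involution is anti-symplectic, not symplectic.} The map $(x_i,y_i)\mapsto(-x_i,y_i)$ reverses the sign of $dy_i\wedge dx_i$, so it is an anti-symplectic involution. Its fixed set is indeed Lagrangian, but for your argument you would need $\Phi$ to intertwine this involution with the standard one $(x,y)\mapsto(-x,y)$ on $B^{2n}(4)$. Ramos's construction in \cite{Ramos2017OnTR} proceeds via toric-domain identifications and does not come with any such equivariance; without it, $\Phi(\{\mathbf 0\}\times_L\diamond^n)$ is just \emph{some} Lagrangian disk in $B^{2n}(4)$, and Biran's barrier statement, which concerns the specific flat disk $\{\mathbf x=0\}$, does not apply. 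The same gap infects your lower bound, since pulling back Biran's ball embedding also requires knowing $\mathcal L$ is standard.

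\textbf{How the paper avoids this.} Rather than invoking the abstract symplectomorphism $\Phi$, the paper builds, for each $\lambda<1$, an \emph{explicit} symplectic embedding $\phi_\lambda:\lambda(\square^n\times_L\diamond^n)\hookrightarrow B^{2n}(4)$ as a product $\sigma\times\cdots\times\sigma$ of two-dimensional area-preserving maps chosen so that $x(\sigma(z))=0$ whenever $x(z)=0$ (Proposition~\ref{prop: upper_bound_cube}, following the idea in \cite{Latschev2011TheGW}). This guarantees by construction that the central fibre lands in the standard disk $\{\mathbf x=0\}$, after which Biran's result applies directly and yields $c(Y)\le 2/\lambda$ for all $\lambda$. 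For the lower bound the paper uses something far more elementary than Biran: a translate of $\tfrac12\square^n$ sits inside $\square^n\setminus\{\mathbf 0\}$ (Theorem~\ref{thm: Gr_holed_K} with $\delta=0$), so $c(Y)\ge\tfrac12\,c(\square^n\times_L\diamond^n)=2$.

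Your minimal-action computation via $\ell_\infty$-length Minkowski billiards is fine and matches the paper's.
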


The lower bounds in Theorems \ref{thm: puncture_higherdim} and \ref{thm: punctured_cube} come from Theorem \ref{thm: Gr_holed_K} to be proven in Section \ref{sec: lag barr}, while the upper bounds come as consequence of the Non-Squeezing Theorem for the former and an embedding obstruction for the latter, we will see these proofs in Section \ref{sec: lag barr}.

Let $K\subset \mathbb{R}^n$, we say that $K$ is $\alpha$-pinched, for $\alpha>1$, if $\widehat{k}\leq \alpha\widecheck{k}$, where $\widehat{k}:=\max_{x\in \partial K}||x||$ and $\widecheck{k}:=\min_{x\in \partial K}||x||$. The next result, consequence of Example \ref{ex: D^2xD^2} and Theorem \ref{thm: puncture_higherdim}, shows that Lagrangian barriers are common, they exist in many dual Lagrangian products.

\begin{theorem}\label{thm: estimatives}
Let $c$ be any normalized symplectic capacity and let $K\subset \mathbb{R}^n$ be a convex domain containing the origin. The following inequalities hold:
\begin{enumerate}
\item  \label{item: 1_estimatives} $4\sqrt{\frac{\widecheck{k}}{\widehat{k}}}\leq c(K\times_L K^{\circ})\leq 4\sqrt{\frac{\widehat{k}}{\widecheck{k}}},$
\item \label{item: 2_estimatives} $2\sqrt{\frac{\widecheck{k}}{\widehat{k}}}\leq c(K\setminus \{(0,0)\}\times_L K^{\circ})\leq 2\sqrt{\frac{\widehat{k}}{\widecheck{k}}}.$
\end{enumerate}
In particular, for $K\subset{\mathbb{R}^2}$ be an $\alpha$-pinched symmetrically convex domain, for $\alpha=\left(\frac{4}{\pi}\right)^{2/3}\approx 1.1751$, we have that
\begin{equation}\label{eq: inequality_est}
c(K\setminus \{(0,0)\}\times_L K^{\circ})< \sqrt{\textup{Vol}(K)\textup{Vol}(K^{\circ})}\leq c(K\times_L K^{\circ}),
\end{equation}
i.e. $\{(0,0)\}\times_LK^{\circ}$ is a Lagrangian barrier inside $K\times_L K^{\circ}$.

\end{theorem}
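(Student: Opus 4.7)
The two-sided bounds in items (\ref{item: 1_estimatives}) and (\ref{item: 2_estimatives}) are obtained by sandwiching $K\times_L K^{\circ}$ between Lagrangian bidisks of the form $D^n(a)\times_L D^n(b)$. From the hypothesis we have $D^n(\widecheck{k})\subset K\subset D^n(\widehat{k})$, and polar duality yields $D^n(1/\widehat{k})\subset K^{\circ}\subset D^n(1/\widecheck{k})$, so that
\[
D^n(\widecheck{k})\times_L D^n(1/\widehat{k}) \;\subset\; K\times_L K^{\circ} \;\subset\; D^n(\widehat{k})\times_L D^n(1/\widecheck{k}).
\]
The linear symplectomorphism $(x,y)\mapsto(\lambda x,\lambda^{-1}y)$, with $\lambda=\sqrt{b/a}$, identifies any $D^n(a)\times_L D^n(b)$ with the symmetric bidisk $D^n(\sqrt{ab})\times_L D^n(\sqrt{ab})$. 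Applying monotonicity of $c$, Example \ref{ex: D^2xD^2}, and conformality to both sides of the sandwich then yields item (\ref{item: 1_estimatives}). Item (\ref{item: 2_estimatives}) is proved by exactly the same sandwich, after puncturing the $K$-factor at the origin and invoking Theorem \ref{thm: puncture_higherdim} (in the limit $\delta\downarrow 0$) in place of Example \ref{ex: D^2xD^2}.

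For the final barrier statement, we combine items (\ref{item: 1_estimatives}) and (\ref{item: 2_estimatives}) with the elementary volume bounds coming from $D^2(\widecheck{k})\subset K\subset D^2(\widehat{k})$, namely $\pi\widecheck{k}^2\leq \textup{Vol}(K)\leq\pi\widehat{k}^2$ and, dually, $\pi/\widehat{k}^2\leq\textup{Vol}(K^{\circ})\leq\pi/\widecheck{k}^2$. Setting $\alpha=\widehat{k}/\widecheck{k}$, these give
\[
\pi/\alpha \;\leq\; \sqrt{\textup{Vol}(K)\,\textup{Vol}(K^{\circ})} \;\leq\; \pi\alpha.
\]
The inequalities in (\ref{eq: inequality_est}) then reduce to two purely numerical conditions on $\alpha$. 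The more restrictive is $\pi\alpha\leq 4/\sqrt{\alpha}$, which is equivalent to $\alpha^{3/2}\leq 4/\pi$, i.e.\ $\alpha\leq(4/\pi)^{2/3}$ — precisely the stated threshold. The other condition, $2\sqrt{\alpha}<\pi/\alpha$, is equivalent to $\alpha<(\pi/2)^{2/3}$ and is automatic under the previous one since $(4/\pi)^{2/3}<(\pi/2)^{2/3}$ (as $8<\pi^2$).

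The main technical obstacle will be passing cleanly to the limit $\delta\downarrow 0$ when extracting item (\ref{item: 2_estimatives}) from Theorem \ref{thm: puncture_higherdim}. Monotonicity applied to the exhaustion $K\setminus\{0\}=\bigcup_{\delta>0}(K\setminus D^n(\delta))$ yields the lower bound immediately, but the matching upper bound requires checking that the embedding (or capacity obstruction) constructed in the proof of Theorem \ref{thm: puncture_higherdim} descends continuously as the removed ball shrinks to a point; this is where care is needed since symplectic capacities are not a priori continuous from below under increasing exhaustions.
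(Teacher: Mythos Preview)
Your approach is essentially identical to the paper's: both items are obtained from the sandwich $D^n(\widecheck{k})\times_L D^n(1/\widehat{k}) \subset K\times_L K^\circ \subset D^n(\widehat{k})\times_L D^n(1/\widecheck{k})$ (punctured at the origin for item~(\ref{item: 2_estimatives})), combined with monotonicity and the known capacity of the (punctured) Lagrangian bidisk; the barrier statement is then deduced from the same volume estimates and numerical comparison you give, exactly as in the paper.

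The one place you diverge is your flagged ``main technical obstacle'' of passing to the limit $\delta\downarrow 0$ --- and this is simply unnecessary. Theorem~\ref{thm: puncture_higherdim} already covers $\delta=0$: with the paper's convention $D^n(0)=\{0\}$, the statement reads $c\big(D^n(1)\setminus\{0\}\times_L D^n(1)\big)=2$, and both ingredients of its proof (Proposition~\ref{prop: upper_bound_bidisk} and Theorem~\ref{thm: Gr_holed_K}) are stated and valid at $\delta=0$. The paper therefore cites Theorem~\ref{thm: puncture_higherdim} directly for the punctured bidisk, with no limiting argument. Your concern about continuity of capacities under increasing exhaustions is a real subtlety in general, but it does not arise here.
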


Similarly to Theorem \ref{thm: estimatives}, we can use Biran's result, mentioned above, to get Lagrangian barriers in more general settings, as the next theorem shows.

\begin{theorem}\label{thm: estimative_2}
Assume $M\subset \mathbb{R}^{2n}$ is a convex region containing the origin. Let 
$$\widehat{m}:=\max_{x\in \partial M} ||x||,$$
and 
$$\widecheck{m}:=\min_{x\in \partial M} ||x||.$$
Assume also $M$ is $\sqrt{2}$-strictly pinched, i.e. $\widehat{m}<\sqrt{2}\widecheck{m}$. Then
$$\frac{1}{4}c(M)\leq c(M\setminus \mathcal{L})\leq c(M).$$  
In particular, if $c(M)> \pi\widecheck{m}^2$, then the right-most inequality is strict.
\end{theorem}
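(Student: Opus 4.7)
The plan is to derive everything from sandwiching the convex domain between two concentric balls and then applying (scaled versions of) Biran's barrier. The right-most inequality $c(M\setminus\mathcal{L})\leq c(M)$ is immediate from monotonicity via the inclusion $M\setminus\mathcal{L}\hookrightarrow M$, so the real content lies in the lower bound and in the strictness claim.

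First, since $M$ is convex and contains the origin it is star-shaped with respect to $0$, so by the definitions of $\widecheck{m}$ and $\widehat{m}$ one has
$$B^{2n}(\pi\widecheck{m}^2)\subset M\subset B^{2n}(\pi\widehat{m}^2),$$
and monotonicity gives $c(M)\leq \pi\widehat{m}^2$. For the lower bound I apply a rescaling of Biran's theorem to the inscribed ball: there is a symplectic embedding $B^{2n}(\pi\widecheck{m}^2/2)\hookrightarrow B^{2n}(\pi\widecheck{m}^2)\setminus\mathcal{L}$, and composing with the inclusion into $M\setminus\mathcal{L}$, together with the fact that the Gromov width is the smallest normalized capacity, yields $c(M\setminus\mathcal{L})\geq \pi\widecheck{m}^2/2$. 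Combining this with $c(M)\leq \pi\widehat{m}^2$ and the strict pinching $\widehat{m}^2<2\widecheck{m}^2$ gives
$$\tfrac{1}{4}c(M)\leq \tfrac{\pi}{4}\widehat{m}^2<\tfrac{\pi}{2}\widecheck{m}^2\leq c(M\setminus\mathcal{L}),$$
which is the desired left-most inequality.

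For the strict inequality when $c(M)>\pi\widecheck{m}^2$, I run the argument in the opposite direction. From $M\setminus\mathcal{L}\subset B^{2n}(\pi\widehat{m}^2)\setminus\mathcal{L}$ together with the upper-bound half of Biran's barrier, I expect $c(M\setminus\mathcal{L})\leq \pi\widehat{m}^2/2$; the pinching then gives $\pi\widehat{m}^2/2<\pi\widecheck{m}^2<c(M)$, so $c(M\setminus\mathcal{L})<c(M)$. The main obstacle is precisely this last step: Biran's theorem is recorded in the paper only as the equality $c_{Gr}(B^{2n}\setminus\mathcal{L})=1/2$, so one needs the upper bound $c(B^{2n}\setminus\mathcal{L})\leq 1/2$ to hold for the normalized capacity $c$ in question, not merely for the Gromov width. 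I expect to invoke a suitably general form of the barrier (for example via Biran--Cornea or the agreement of the relevant capacities with $c_{Gr}$ on such convex pieces of $\mathbb{R}^{2n}$); once that is granted, the remainder of the proof is nothing more than the sandwich bounds and pinching arithmetic.
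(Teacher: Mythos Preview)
Your proposal is correct and follows essentially the same route as the paper: sandwich $M\setminus\mathcal{L}$ between $B^{2n}(\pi\widecheck{m}^2)\setminus\mathcal{L}$ and $B^{2n}(\pi\widehat{m}^2)\setminus\mathcal{L}$, apply Biran's barrier to both ends to get $\tfrac{\pi\widecheck{m}^2}{2}\leq c(M\setminus\mathcal{L})\leq \tfrac{\pi\widehat{m}^2}{2}$, compare with $\pi\widecheck{m}^2\leq c(M)\leq \pi\widehat{m}^2$, and finish with the pinching arithmetic. The concern you raise about needing the upper bound $c(B^{2n}\setminus\mathcal{L})\leq 1/2$ for \emph{every} normalized capacity (not just $c_{Gr}$) is legitimate, but the paper's own proof carries the identical implicit assumption---it simply writes ``by Biran's result'' for both inequalities---so your argument is at least as complete.
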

\begin{proof}
Clearly $B^{2n}(\pi \widecheck{m}^2)\setminus \mathcal{L}\subseteq M\setminus \mathcal{L}\subseteq B^{2n}(\pi \widehat{m}^2)\setminus \mathcal{L}$, which implies by Biran's result that 
$$\frac{\pi\widecheck{m}^2}{2}\leq c(M\setminus \mathcal{L})\leq \frac{\pi\widehat{m}^2}{2}$$
But we also know that 
$$\pi\widecheck{m}^2\leq c(M)\leq \pi\widehat{m}^2.$$
So, using the pinching condition, the claim follows. 
\end{proof}



\subsection{Other applications of Non-Squeezing}

As commented above, the proofs of Theorems \ref{thm: puncture_higherdim} and \ref{thm: punctured_cube} rely on the celebrated Gromov's Non-Squeezing Theorem. In this section we give two more interesting applications of Non-Squeezing to symplectic embedding problems, whose proof follow the same ideas as those in the proofs of theorem \ref{thm: puncture_higherdim}. We begin by setting up some notation.

Let $a>0$ and $\mathbf{q}:=(q_1,q_2,q_3)\in \mathbb{R}^3$. We define the truncated cylinder:

$$C_a(R):=\left\{\mathbf{q}\in \mathbb{R}^3\mid q_1^2+q_2^2=\frac{R}{\pi}, |q_3|< a\right\}.$$
For the case $R=\pi$, we abbreviate $C_a:=C_a(\pi)$. Hence, the disk cotangent bundle of $C_a$, with the metric induced by its inclusion in $T^*\mathbb{R}^3\cong \mathbb{R}^6$, is given by:
$$D^*C_a=\{(\mathbf{q},\mathbf{p})\in \mathbb{R}^6\mid \mathbf{q}\in C_a,\langle \mathbf{q},\mathbf{p}\rangle=0, |\mathbf{p}|^2< 1\},$$
where $\mathbf{p}:=(p_1,p_2,p_3)\in \mathbb{R}^3$.

In order to state and prove the applications of Non-Squeezing, we first state the following necessary result, to be proven in Section \ref{sec: disk_cot}.

\begin{theorem}\label{thm: Gromov_width}
    For any normalized symplectic capacity $c$, we have that
\begin{equation}
   c(D^*C_a)= 
\begin{cases}
    \begin{split}
       4a&, \textup{ if } a\leq \pi\\
        4\pi&, \textup{ if } a\geq \pi.
    \end{split}
    \end{cases}
\end{equation}   
\end{theorem}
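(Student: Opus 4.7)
The plan is to produce matching upper and lower bounds equal to $\min\{4a,4\pi\}$. I will use the coordinates $(\theta,z,p_\theta,p_z)$ on $D^*C_a$, with $\theta\in S^1_{2\pi}$, $|z|<a$, $p_\theta^2+p_z^2<1$, and symplectic form $dp_\theta\wedge d\theta+dp_z\wedge dz$.

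For the upper bound, observe that $D^*C_a$ is contained in the symplectic product of the open annulus $\{(\theta,p_\theta):\theta\in S^1_{2\pi},\ |p_\theta|<1\}$ and the open rectangle $\{(z,p_z):|z|<a,\ |p_z|<1\}$. The classical area-preserving map $(\theta,p_\theta)\mapsto\sqrt{2(p_\theta+1)}(\cos\theta,\sin\theta)$ identifies the annulus symplectically with the punctured disk $B^2(4\pi)\setminus\{0\}$, while the open rectangle of area $4a$ embeds symplectically into $B^2(4a+\varepsilon)$ for every $\varepsilon>0$. Applying both factor embeddings in the two possible orderings produces symplectic embeddings $D^*C_a\hookrightarrow Z^4(4a+\varepsilon)$ and $D^*C_a\hookrightarrow Z^4(4\pi+\varepsilon)$. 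Letting $\varepsilon\to 0$, monotonicity and normalization yield $c(D^*C_a)\leq\min\{4a,4\pi\}$.

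For the lower bound, fix $a_0<\min\{a,\pi\}$ and consider the Lagrangian product $\square^2(a_0,a_0)\times_L\diamond^2(1)$. It embeds symplectically into $D^*C_a$: the condition $|\theta|<a_0<\pi$ defines a proper open arc in $S^1_{2\pi}$, $|z|<a_0<a$ fits in the $z$-interval, and $|p_\theta|+|p_z|<1$ forces $p_\theta^2+p_z^2\leq(|p_\theta|+|p_z|)^2<1$. Starting from Example \ref{ex: cube_x_diamond}, $\square^2(1,1)\times_L\diamond^2(1)\cong B^4(4)$, the symplectomorphism $(x,y)\mapsto(x/\sqrt{a_0},\sqrt{a_0}\,y)$ sends $\square^2(a_0,a_0)\times_L\diamond^2(1)$ to $\square^2(\sqrt{a_0},\sqrt{a_0})\times_L\diamond^2(\sqrt{a_0})$; conjugating the Example \ref{ex: cube_x_diamond} identification by the non-symplectic rescaling $(x,y)\mapsto(\sqrt{a_0}\,x,\sqrt{a_0}\,y)$, which multiplies $\omega_0$ by $a_0$ and sends $B^4(4)$ to $B^4(4a_0)$, identifies this latter set symplectically with $B^4(4a_0)$. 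Hence $c(D^*C_a)\geq 4a_0$ for every $a_0<\min\{a,\pi\}$, and letting $a_0\to\min\{a,\pi\}$ gives $c(D^*C_a)\geq 4\min\{a,\pi\}$.

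The two bounds together prove the theorem. The main obstacle is choosing the right Lagrangian subproduct inside $D^*C_a$: the naive product $(S^1\times(-a,a))\times_L D^2(1)$ is not itself a rescaled cube--diamond, so to apply Example \ref{ex: cube_x_diamond} one has to inscribe a cube $\square^2(a_0,a_0)$ in the fundamental domain of $S^1\times(-a,a)$ and replace the disk fiber $D^2(1)$ with the inscribed diamond $\diamond^2(1)$. The constraint $a_0<\min\{a,\pi\}$ captures exactly the threshold $a=\pi$ appearing in the theorem statement.
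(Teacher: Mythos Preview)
Your proof is correct and follows essentially the same route as the paper: the upper bound comes from the explicit symplectic embedding of the $(\theta,p_\theta)$-annulus into a disk of area $4\pi$ (the paper writes this as $f(z,\theta,p_z,p_\theta)=(1+p_\theta,-\theta/(1+p_\theta),z,p_z)$) together with an area-preserving embedding of the $(z,p_z)$-rectangle, and the lower bound comes from inscribing a rescaled $\square^2\times_L\diamond^2$ and invoking the identification with a ball from Example~\ref{ex: cube_x_diamond}. The only cosmetic difference is that the paper uses the rectangular base $\square(a-\varepsilon/2,\pi-\varepsilon/2)$ and cites \cite{Ramos2017OnTR,Latschev2011TheGW} for its capacity, whereas you use the square $\square^2(a_0,a_0)$ and derive its capacity by an explicit rescaling of Example~\ref{ex: cube_x_diamond}; both yield the same limiting bound $4\min\{a,\pi\}$.
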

We also consider the unbounded cylinder $C(R)\subset \mathbb{R}^3$, to be defined as:
$$C(R):=\left\{\mathbf{q}\in \mathbb{R}^3\mid q_1^2+q_2^2=\frac{R}{\pi}\right\},$$
i.e. the limit of the spaces $C_a(R)$ for $a\rightarrow \infty$.

\begin{remark}
    Heuristically, the fact that $c_{Gr}(D^*C_a)=4\pi$ for $a$ big enough is not surprising, given that $D^*C_a$ converges to $D^*C(\pi)$ and in turn, $D^*C(\pi)$ can be thought of, roughly speaking, as the limit of the disk cotangent bundles $D^*\mathcal{E}(1,1,c)$ of the ellipsoids of revolution $\mathcal{E}(1,1,c)$, i.e.
    $$\mathcal{E}(1,1,c):=\left\{(x,y,z)\in \mathbb{R}^3\mid x^2+y^2+\frac{z^2}{c^2}=1\right\}$$
when $c\rightarrow \infty$, and it was shown in \cite{ferreira2023gromovwidthdiskcotangent}, by Ferreira, Ramos and the author, that $c_{Gr}(D^*\mathcal{E}(1,1,c))=4\pi$, for sufficiently large $c$.
\end{remark}

We are now ready to state and prove the first of the two interesting results of this section, which in some sense, can be thought itself as a Non-Squeezing phenomenon for symplectic embeddings of balls into the disk cotangent bundle of the unbounded cylinder.

\begin{theorem}\label{thm: NonSqueezing}
    The ball $B^4(r)$ symplectically embeds into $D^*C(R)$, if and only if, $r\leq 4R$.
\end{theorem}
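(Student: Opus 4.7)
The plan is to mirror the proof of Theorem \ref{thm: puncture_higherdim}: establish the necessity direction using Gromov's Non-Squeezing Theorem, and the sufficiency direction via an explicit construction. The key preliminary observation is that any symplectic embedding $\phi: B^4(r) \hookrightarrow D^*C(R)$ has compact image, so there exists $a > 0$ sufficiently large with $\phi(B^4(r)) \subset D^*C_a(R)$. The problem thus reduces to understanding the bounded case $D^*C_a(R)$, which is a generalization of the setting of Theorem \ref{thm: Gromov_width} (stated there only for $R = \pi$).

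For the necessity, I would prove the generalization $c(D^*C_a(R)) \leq 4R$ for all $a$ and $R$ by extending the construction in the proof of Theorem \ref{thm: Gromov_width}. Concretely, I would first use the symplectic rescaling $(\mathbf{q}, \mathbf{p}) \mapsto (\alpha \mathbf{q}, \mathbf{p}/\alpha)$ with $\alpha = \sqrt{\pi/R}$, which identifies $D^*C_a(R)$ (in the paper's sense) with a version of $D^*C_{a\sqrt{\pi/R}}(\pi)$ whose cotangent disk radius is $\sqrt{R/\pi}$; then I would exhibit a symplectic embedding of this rescaled object into the standard symplectic cylinder $Z^4(4R)$. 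Applying Gromov's Non-Squeezing Theorem to the composition $B^4(r) \hookrightarrow D^*C_a(R) \hookrightarrow Z^4(4R)$ then yields $r \leq 4R$. For the sufficiency, I would construct an explicit symplectic embedding of $B^4(r)$ into $D^*C(R)$ for any $r < 4R$, exploiting the available room in the axial direction to fit a ball that wraps around the circular direction of the cylinder. A careful application of action-angle coordinates in the $(\theta, p_\theta)$-pair, adapted to the cylindrical structure, extends the construction in the proof of Theorem \ref{thm: Gromov_width} (covering the saturation range $a \geq \pi$ when $R = \pi$) to the general case; passing to the limit $a \to \infty$ provides the desired embedding into $D^*C(R)$.

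The main obstacle will be the construction of the tight non-squeezing embedding $D^*C_a(R) \hookrightarrow Z^4(4R)$, ensuring the bound is exactly $4R$ rather than a weaker quantity such as $4\sqrt{\pi R}$ that one might obtain from a simple annulus-into-disk embedding of the $(\theta, p_\theta)$-component alone. The difficulty is that a naive product decomposition decouples the base and fiber directions and loses the precise bound; one must instead use the paper's specific definition of $D^*C_a(R)$ with the constraint $\langle \mathbf{q}, \mathbf{p}\rangle = 0$, which couples the cotangent and base directions and distinguishes $D^*C_a(R)$ from a naive product structure, to obtain the sharp $4R$. Mirroring the strategy of Theorem \ref{thm: Gromov_width} directly in this coupled setting will be the core technical step of the proof.
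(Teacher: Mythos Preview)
Your proposal is correct and takes essentially the same approach as the paper: sufficiency from Theorem~\ref{thm: Gromov_width}, and necessity by embedding into $Z^4(4R)$ and invoking Gromov's Non-Squeezing. The paper's execution is simply more direct: rather than first reducing to $D^*C_a(R)$ by compactness and then rescaling to normalize $R$, it writes down in one line an explicit symplectic embedding $f:D^*C(R)\to B^2(4R)\times\mathbb{R}^2$ in cylindrical coordinates,
\[
f(z,\theta,p_z,p_\theta)=\Bigl(\sqrt{R/\pi}+p_\theta,\ -\theta\big/\bigl(\sqrt{R/\pi}+p_\theta\bigr),\ z,\ p_z\Bigr),
\]
which handles the unbounded cylinder and arbitrary $R$ simultaneously and lands in $B^2(4R)$ because $|p_\theta|<\sqrt{R/\pi}$. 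This formula dissolves what you flag as the ``main obstacle'' (getting the sharp $4R$ rather than $4\sqrt{\pi R}$) without any detour; note also that your rescaling step does not truly simplify the problem, since after normalizing the base radius to $1$ the cotangent disk radius becomes $\sqrt{R/\pi}$ and one faces the identical embedding question with the identical sharp constant.
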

\begin{proof}
    On one hand, it follows from Theorem \ref{thm: Gromov_width} that  the ball $B^4(4R)$ embeds into $D^*C_a(R)$ and therefore, also does the ball $B^4(r)$, for all $r\leq 4R$. On the other hand, let $(z,\theta)$ denote $\mathbf{q}$ in cylindrical coordinates and let $(p_z,p_{\theta})$ be the induced coordinates for $\mathbf{p}$. That is
    \begin{equation}\label{eq: cyl_coords}
    \begin{split}
        (q_1,q_2,q_3)&=\left(\sqrt{\frac{R}{\pi}}\cos{\theta}, \sqrt{\frac{R}{\pi}}\sin{\theta}, z\right),\\
        (p_1,p_2,p_3)&=\left(-\sqrt{\frac{\pi}{R}}p_{\theta}\sin{\theta} ,\sqrt{\frac{\pi}{R}}p_{\theta}\cos{\theta}, p_z\right).
    \end{split}
    \end{equation}
    Consider the map $f: D^*C(R)\rightarrow B^2\left(4R\right)\times \mathbb{R}^2$ given by
    $$f(z,\theta,p_z,p_{\theta})=\left(\sqrt{\frac{R}{\pi}}+p_{\theta},-\frac{\theta}{\sqrt{\frac{R}{\pi}}+p_{\theta}},z,p_z\right).$$
    It is not hard to see this is a symplectic embedding. For this notice that, the symplectic form in $D^*C_a$ in the cylindrical coordinates in \eqref{eq: cyl_coords} is given by $dp_{\theta}\wedge d\theta+dp_z\wedge dz$, whereas the symplectic form in the target space $B^2\left(4R\right)\times \mathbb{R}^2$, using radial coordinates $(r,\theta)$ in the first factor and cartesian coordinates $(x,y)$ in the second factor, is $-rdr\wedge d\theta+dy\wedge dx$. Hence, by monotonicity of the Gromov width and Gromov's Non-squeezing theorem, we have that $c_{Gr}(D^*C(R))\leq c_{Gr}(B^2(4R)\times \mathbb{R}^2)=4R$. So, $B^4(r)$ symplectically embeds into $D^*C(R)$, if and only if, $r\leq 4R$.
\end{proof}

Let $X_a^+:=\{\mathbf{q}\in \mathbb{R}^3\mid q_3=a, q_1^2+q_2^2\geq 1\}$ and $X_a^-:=\{\mathbf{q}\in \mathbb{R}^3\mid q_3=-a, q_1^2+q_2^2\geq 1\}$. We define the \textit{open dumbbell} $X_a$ to be the union $X_a^+\cup C_a \cup X_a^-$, see Figure \ref{fig: X_a}.  We denote by $\text{Emb}(B^4(r),D^*X_a)$ the space of all possible symplectic embeddings of the ball $B^4(r)$ into the disk cotangent bundle $D^*X_a$ of the open dumbbell $X_a$. Notice that, we can always find a copy of any size of the Lagrangian bidisk inside both $D^*X_a^{\pm}$, and therefore because of \cite{Ramos2015SymplecticEA}, we can find balls of any size inside both $D^*X_a^{\pm}$. Hence, the space $\text{Emb}(B^4(r),D^*X_a)$ is non-empty for every $r>0$. The next result is a ``symplectic camel"-like statement for the space $D^*X_a$. 

\begin{figure}[htbp] 
    \centering 
    \begin{tikzpicture}[scale=1.5]

    \def\planeWidth{3.4} 
    \def\planeHeight{1.0} 
    \def\tilt{1.0} 

    \draw[thick] 
        (-\planeWidth/2+\tilt,1+\planeHeight/2) -- 
        (\planeWidth/2+\tilt,1+\planeHeight/2) -- 
        (\planeWidth/2-\tilt,1-\planeHeight/2) -- 
        (-\planeWidth/2-\tilt,1-\planeHeight/2) -- cycle;

    \draw[thick] 
        (-\planeWidth/2+\tilt,-1+\planeHeight/2) -- 
        (-0.5,-1+\planeHeight/2);
    \draw[dashed] 
        (-0.5,-1+\planeHeight/2) -- 
        (0.5,-1+\planeHeight/2);
    \draw[thick] 
        (0.5,-1+\planeHeight/2) -- 
        (\planeWidth/2+\tilt,-1+\planeHeight/2);

    \draw[thick] 
        (\planeWidth/2+\tilt,-1+\planeHeight/2) -- 
        (\planeWidth/2-\tilt,-1-\planeHeight/2) -- 
        (-\planeWidth/2-\tilt,-1-\planeHeight/2) -- 
        (-\planeWidth/2+\tilt,-1+\planeHeight/2);

    \draw[dashed] (0.5,1) -- (0.5,{1-\planeHeight/2}); 
    \draw[thick] (0.5,{1-\planeHeight/2}) -- (0.5,-1); 

    \draw[dashed] (-0.5,1) -- (-0.5,{1-\planeHeight/2}); 
    \draw[thick] (-0.5,{1-\planeHeight/2}) -- (-0.5,-1); 

    \draw[dashed] (0,-1) ellipse [x radius=0.5, y radius=0.2]; 
    \draw[dashed] (0,1) ellipse [x radius=0.5, y radius=0.2]; 

    \draw[dashed] (-0.5,-1) arc[start angle=180,end angle=0,x radius=0.5,y radius=0.2];

    \node at (\planeWidth/2+0.6,1) {$q_3 = a$}; 
    \node at (\planeWidth/2+0.6,-1) {$q_3 = -a$}; 

    \node at (-\planeWidth/2-0.5,1) {$X_a^+$}; 
    \node at (-\planeWidth/2-0.5,-1) {$X_a^-$}; 

    \node at (0.7,0) {$C_a$}; 

    \end{tikzpicture}
    \caption{$X_a$.} 
    \label{fig: X_a} 
\end{figure}

\begin{theorem}\label{thm: camel}
    For any $a>0$, the space $\text{Emb}(B^4(r),D^*X_a)$ is disconnected for any $r> 4\pi$.
\end{theorem}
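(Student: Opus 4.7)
My strategy is to argue by contradiction, along the lines of the proof of Theorem~\ref{thm: NonSqueezing}, by combining Theorem~\ref{thm: Gromov_width} with a continuity argument in the coordinate $q_3$. Assume that for some $r > 4\pi$ there is a continuous one-parameter family $\{\phi_t\}_{t \in [0,1]}$ of symplectic embeddings $\phi_t \colon B^4(r) \hookrightarrow D^*X_a$ with $\phi_0(B^4(r)) \subset D^*X_a^-$ and $\phi_1(B^4(r)) \subset D^*X_a^+$. The aim is to produce a time $t^\ast \in (0,1)$ at which the entire image $\phi_{t^\ast}(B^4(r))$ is contained in the open cylindrical neck $D^*C_a$; monotonicity of the Gromov width would then force, via Theorem~\ref{thm: Gromov_width}, $r \leq c_{Gr}(D^*C_a) \leq 4\pi$, contradicting the hypothesis.

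To locate such a time, I would track the coordinate $q_3$ along the family via the continuous auxiliary functions $m(t) := \min_{x \in B^4(r)} q_3(\phi_t(x))$ and $M(t) := \max_{x \in B^4(r)} q_3(\phi_t(x))$, which satisfy $m(0) = M(0) = -a$ and $m(1) = M(1) = a$. The sets $I := \{t : m(t) = -a\}$ and $J := \{t : M(t) = a\}$ are closed subsets of $[0,1]$, with $0 \in I$ and $1 \in J$. If $I \cup J \neq [0,1]$, then any $t^\ast$ in the complement satisfies $-a < m(t^\ast) \leq M(t^\ast) < a$, hence $\phi_{t^\ast}(B^4(r)) \subset D^*C_a$, and the argument terminates.

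The main obstacle is the complementary case, in which by connectedness of $[0,1]$ the two closed sets overlap at some time $t_0$, where $\phi_{t_0}(B^4(r))$ simultaneously touches both plates. I would eliminate this case by a suitable choice of initial and terminal embeddings together with a cutoff perturbation. Concretely, the embeddings $\phi_0, \phi_1$ can be chosen so that their images are compactly supported in $D^*X_a^\pm$ bounded away from the boundary circles $\partial X_a^\pm$, and the family $\phi_t$ can then be modified by Hamiltonian isotopies of $D^*X_a^\pm$ supported near the plates so that the perturbed family passes through a time at which the image lies inside a slightly enlarged open cylinder $D^*C_{a+\varepsilon}$ for some small $\varepsilon > 0$; since Theorem~\ref{thm: Gromov_width} still bounds $c_{Gr}(D^*C_{a+\varepsilon})$ by $4\pi$, monotonicity closes the argument. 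The main technical point to verify is that this cutoff perturbation can be performed continuously in $t$ and symplectically, which should proceed along the same lines as the constructions underlying Theorem~\ref{thm: puncture_higherdim}.
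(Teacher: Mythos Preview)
Your approach has a genuine gap at the ``main obstacle'' you yourself identify, and the proposed fix does not resolve it. The scenario $I \cup J = [0,1]$ is not a technicality to be removed by perturbation --- it is precisely the phenomenon the camel theorem captures. Along a path $\phi_t$ connecting the two plates, nothing forbids the image from always retaining a ``foot'' on at least one plate while a portion threads through the neck to the other side; at no time need the entire ball sit inside $D^*C_a$. Your suggested remedy (Hamiltonian isotopies of $D^*X_a^\pm$ supported near the plates, pushing the image into a slightly longer cylinder $D^*C_{a+\varepsilon}$) amounts to asserting that the ball can, after all, be squeezed into the neck. But any ambient symplectomorphism preserves capacities, so if $B^4(r)$ does not fit into $D^*C_{a+\varepsilon}$ to begin with --- and for $r>4\pi$ it does not, by Theorem~\ref{thm: Gromov_width} --- no such isotopy will make it fit. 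In short, the Gromov-width bound on the neck alone cannot separate the two components of the embedding space; this is the well-known fact that pure capacity arguments do not by themselves yield camel-type rigidity.

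The paper proceeds by a different and more robust mechanism: it constructs an explicit symplectic embedding $g\colon D^*X_a \to M$, where $M \subset \mathbb{R}^4$ is the standard camel space (the complement of a wall with a hole of capacity $4\pi$), arranged so that $D^*X_a^+$ and $D^*X_a^-$ land on opposite sides of the wall. Postcomposition with $g$ then gives a continuous map $\text{Emb}(B^4(r),D^*X_a) \to \text{Emb}(B^4(r),M)$ whose image meets two distinct components, and the classical Symplectic Camel Theorem for $M$ (proved via pseudoholomorphic curves or generating functions) supplies the disconnectedness. The paper thus \emph{imports} the hard analytic input rather than attempting to reprove it from capacity bounds, which is exactly where your argument runs aground.
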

\begin{proof}
    In cylindrical coordinates 
    \begin{equation}\label{eq: cyl_coords2}
    \begin{split}
        (q_1,q_2,q_3)&=\left(r\cos{\theta}, r\sin{\theta}, z\right),\\
        (p_1,p_2,p_3)&=\left(p_r\cos\theta-\frac{p_{\theta}}{r}\sin{\theta} ,p_r\sin\theta +\frac{p_{\theta}}{r}\cos{\theta}, p_z\right),
    \end{split}
    \end{equation}
    in $T^*\mathbb{R}^3$, we have that
    $$D^*X_a^{\pm}=\left\{(r,\theta,z,p_r,p_{\theta},p_z)\in T^*\mathbb{R}^3\mid z=\pm a, r\geq 1,p_z=0, p_r^2+\frac{p_{\theta}^2}{r^2}< 1\right\},$$
    and as before,
$$D^*C_a=\left\{(r,\theta,z,p_r,p_{\theta},p_z)\in \mathbb{R}^3\mid r=1,|z|\leq a, p_r=0,p_z^2+p_{\theta}^2<1\right\}.$$
Now, consider the symplectic manifold $(\mathbb{R}^4, \omega_0)$ where $\mathbb{R}^4=\mathbb{R}^2\times \mathbb{R}^2$, with polar coordinates $(r,\theta)$ in the first factor and Cartesian coordinates $(x,y)$ in the second factor. Here the symplectic form is given by $\omega_0=-rdr\wedge d\theta +dy\wedge dx$. The symplectic form in $D^*X_a$, with coordinates induced by inclusion in $T^*\mathbb{R}^3$ as in \eqref{eq: cyl_coords2}, is given by $dp_r\wedge dr+dp_{\theta}\wedge d\theta+dp_z\wedge dz$. Let $M\subset \mathbb{R}^4$ be the submanifold defined as the complement of the set
\begin{equation*}
    \tilde{M}:=\{(r,\theta,x,y)\in \mathbb{R}^4\mid y=0, r^2+x^2>4\},
\end{equation*}
i.e. $M=\mathbb{R}^2\setminus \tilde{M}.$ The famous symplectic camel theorem states that the  space $\text{Emb}(B^4(R),M)$ is disconnected for every $R>4\pi$, see for example \cite{Gromov}, for a proof.

We are now going to define a symplectic embedding $g: D^*X_a\rightarrow M$, satisfying the property that, the $y$-coordinate of the image by $g$ of any point in $D^*X_a^+$ or $D^*X_a^-$ is, strictly positive or negative, respectively.

\begin{equation*}
g(r,\theta,z,p_r,p_{\theta},p_z)=
\begin{cases}
    \begin{split}
\left(1+p_{\theta},-\frac{\theta}{1+p_{\theta}},z,p_z\right)&, \textup{ if } (r,\theta,z,p_r,p_{\theta},p_z)\in D^*C_a,\\
\left(1+p_{\theta},-\frac{\theta}{1+p_{\theta}}, r-1+a,p_r\right)&, \textup{ if } (r,\theta,z,p_r,p_{\theta},p_z)\in D^*X_a^{+},\\
\left(1+p_{\theta},-\frac{\theta}{1+p_{\theta}}, -r-1-a,-p_r\right)&, \textup{ if } (r,\theta,z,p_r,p_{\theta},p_z)\in D^*X_a^{-}.
    \end{split}
\end{cases}
\end{equation*}
The symplectic embedding $g$ induces a continuous map $\text{Emb}(B^4(r),D^*X_a)\hookrightarrow \text{Emb}(B^4(r),M)$, for every $r>0$, whose image intersects at least two different connected components of $\text{Emb}(B^4(r),M)$. To see this, just notice that we can embed the $B^4(r)$, for any $r>0$, into both $D^*X_a^{\pm}$ and the image of these balls, by the embedding $g$, falls into different sides of $M$ with respect to the disk $\{y=0, r^2+x^2\leq 4\}\subset M$.
Therefore, $\text{Emb}(B^4(r),D^*X_a)$ must be disconnected for $r>4\pi$.
\end{proof}

\noindent{\bf Structure of the paper:} In Section \ref{sec: lag barr}, we give the proofs of Theorems  \ref{thm: punctured_cube}, \ref{thm: estimatives} and \ref{thm: estimative_2}. In Section \ref{sec: disk_cot}, we give the proof of Theorem \ref{thm: Gromov_width}. 

\noindent{\bf Acknowledgements:} The author thanks Y. Ostrover for conversations that lead to work on these results and Vinicius G. B. Ramos for valuable comments and suggestions on an earlier version of this manuscript. A. V. was supported by the ISF Grant No. 2445/20.

\section{Lagrangian barriers}\label{sec: lag barr}

We begin this section by proving the following lower bound on the symplectic capacity of the Lagrangian product of an annular region (possibly a punctured region) and the dual body of the convex hull of this annular region.

\begin{theorem}\label{thm: Gr_holed_K}
Let $0\leq\delta<1$ and $K\subset \mathbb{R}^n$ be a convex and centrally symmetric domain. Then, for any symplectic capacity, not necessarily normalized, we have that 
$$c(K\setminus \delta K \times_L K^{\circ})\geq \frac{1-\delta}{2}c(K\times_L K^{\circ}).$$
For $\delta=0$, we think of $K\setminus \delta K$ as $K\setminus \{(0,0)\}$.
\end{theorem}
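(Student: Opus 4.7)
The plan is to construct, for each $\lambda < \tfrac{1-\delta}{2}$, a conformally symplectic embedding of $K \times_L K^\circ$ into $(K \setminus \delta K) \times_L K^\circ$ that pulls back $\omega_0$ to $\lambda \omega_0$, and then combine monotonicity with the conformality axiom $c(M,\lambda\omega) = \lambda \, c(M,\omega)$.

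The geometric heart of the argument is the following inclusion: fix any $p \in \partial K$ and set $c := \tfrac{1+\delta}{2}\,p$; then for every $\lambda < \tfrac{1-\delta}{2}$,
$$\lambda K + c \;\subset\; K \setminus \delta K.$$
Containment in $K$ is a direct convexity check, because $\lambda w + c$ is a convex combination of $w\in K$, the point $p\in K$ and the origin $0\in K$ with nonnegative coefficients summing to $1$ whenever $\lambda + \tfrac{1+\delta}{2} \leq 1$. For disjointness from $\delta K$, let $u$ be a supporting hyperplane direction of $K$ at $p$, so $\langle p, u\rangle = h_K(u)$. Central symmetry gives $h_K(-u) = h_K(u)$, hence $\langle w, u\rangle \geq -h_K(u)$ for every $w\in K$, and so for any $x = \lambda w + c$ one gets
$$\langle x, u\rangle \;\geq\; \bigl(\tfrac{1+\delta}{2} - \lambda\bigr)\, h_K(u) \;>\; \delta\, h_K(u) \;=\; h_{\delta K}(u),$$
which forces $x \notin \delta K$. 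The case $\delta = 0$ is the same: the translate $\lambda K + \tfrac{1}{2}p$ misses the origin once $\lambda < \tfrac{1}{2}$.

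With this inclusion in hand, the linear map $\phi(x,y) := (\lambda x + c,\, y)$ embeds $K \times_L K^\circ$ into $(K \setminus \delta K) \times_L K^\circ$; since only the first factor is rescaled, a one-line computation gives $\phi^*\omega_0 = \lambda\, \omega_0$. Viewing $\phi$ as an honest symplectic embedding $(K \times_L K^\circ,\, \lambda\omega_0) \hookrightarrow ((K \setminus \delta K) \times_L K^\circ,\, \omega_0)$, monotonicity and conformality yield
$$\lambda\, c(K \times_L K^\circ) \;=\; c(K \times_L K^\circ,\, \lambda\omega_0) \;\leq\; c\bigl((K \setminus \delta K) \times_L K^\circ\bigr),$$
and letting $\lambda \nearrow \tfrac{1-\delta}{2}$ gives the claimed bound. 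The main technical point is the support-function estimate; this is exactly where both convexity and central symmetry of $K$ are essential, via the identity $h_K(-u) = h_K(u)$.
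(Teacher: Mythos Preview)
Your proof is correct and follows essentially the same approach as the paper: translate a shrunken copy $\lambda K$ by $\tfrac{1+\delta}{2}\,p$ (the paper picks $p=h_K(e_1)e_1$) into $K\setminus\delta K$, use the same supporting-hyperplane argument exploiting central symmetry to stay clear of $\delta K$, and then read off the capacity bound from the induced embedding of Lagrangian products. The only cosmetic differences are that you take $\lambda<\tfrac{1-\delta}{2}$ and pass to the limit (the paper works with $\lambda=\tfrac{1-\delta}{2}$ directly), and that you invoke the conformality axiom on $(K\times_L K^\circ,\lambda\omega_0)$ whereas the paper identifies $\tfrac{1-\delta}{2}K\times_L K^\circ$ with $\sqrt{\tfrac{1-\delta}{2}}\,(K\times_L K^\circ)$ via a linear symplectomorphism before applying monotonicity; these are equivalent rephrasings of the same scaling step.
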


\begin{figure}[htbp]
    \centering
    \begin{tikzpicture}

        \definecolor{palestinianRed}{rgb}{0.9, 0.1, 0.1}

        \draw[->] (-3,0) -- (3,0) node[right] {};
        \draw[->] (0,-3) -- (0,3) node[above] {};

        \def\rotationAngle{63.4349} 

        \begin{scope}
            \fill[RoyalBlue!70, opacity=0.8, rotate around={\rotationAngle:(0,0)}] 
                (0,0) ellipse (2.0cm and 1.2cm)
                [even odd rule]
                (0,0) ellipse (0.5cm and 0.2cm);
        \end{scope}

        \draw[thick, rotate around={\rotationAngle:(0,0)}] (0,0) ellipse (2.0cm and 1.2cm);

        \draw[thick, rotate around={\rotationAngle:(0,0)}] (0,0) ellipse (0.5cm and 0.2cm);

        \node[below left] at (-1,2) {$K$};
        \node[below left] at (-0.2,-0.6) {\textcolor{black}{$K\setminus\delta K$}};

    \end{tikzpicture}
    \caption{The set $K\setminus \delta K$.}
\end{figure}
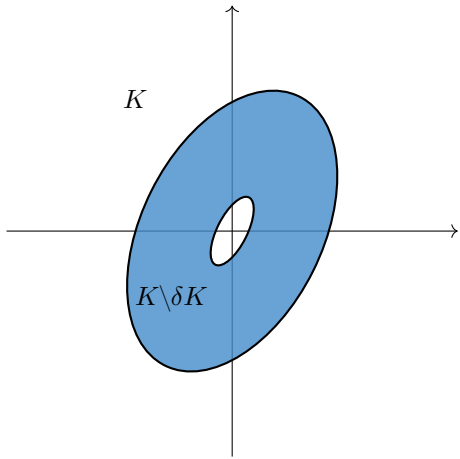

\begin{proof}
Let $h_K:S^{n-1}\rightarrow \mathbb{R}$ be the function defining the boundary $\partial K$ of $K$, i.e. if $z\in \partial K$ then $z=h_k\left(z/||z||\right)z/||z||$. We claim that the affine transformation
\begin{equation*}
\begin{split}
f_{\delta}:\frac{1-\delta}{2}K &\rightarrow K\setminus \delta K,\\
z&\mapsto \left(\frac{1+\delta}{2}\right)h_K(e_1)e_1+z,
\end{split}
\end{equation*}
is well-defined, where $e_1:=(1,0,\dots,0)$. 

To see that the image of $f_{\delta}$ lies inside $K$ notice that, any point $y$ in $f_{\delta}\left(\frac{1-\delta}{2}K\right)$, can be written as
\begin{equation}\label{eq: y}
y=\frac{1-\delta}{2}x+\frac{1+\delta}{2}h_K(e_1)e_1,
\end{equation}
for some $x\in K$. Let $\langle a,x\rangle=b$, for $a\in \mathbb{R}^n$ and $b\in \mathbb{R}$, be a supporting hyperplane for $K$, i.e. $\langle a,x\rangle\leq b$ for every $x\in K$. Then,
\begin{equation*}
\begin{split}
\langle a,y\rangle &=\bigg\langle a,\frac{1-\delta}{2}x+\frac{1+\delta}{2}h_K(e_1)e_1\bigg\rangle,\\
&=\frac{1-\delta}{2}\langle a,x\rangle+\frac{1+\delta}{2}\langle a,h_K(e_1)e_1\rangle,\\
&\leq \frac{1-\delta}{2}b+\frac{1+\delta}{2}b,\\
&\leq b,
\end{split}
\end{equation*}
since $h_K(e_1)e_1\in \partial K$. So, $y\in K$.
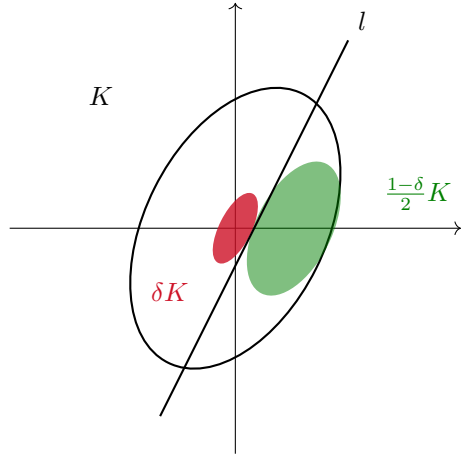
\begin{figure}[htbp] 
    \centering 
    \begin{minipage}{0.45\textwidth} 
		\begin{tikzpicture}

        \draw[->] (-3,0) -- (3,0) node[right] {};
        \draw[->] (0,-3) -- (0,3) node[above] {};

        \def\rotationAngle{63.4349} 

        \draw[thick, rotate around={\rotationAngle:(0,0)}] (0,0) ellipse (2.0cm and 1.2cm);

        \draw[palestinianRed, thick, fill=palestinianRed, opacity=0.8, rotate around={\rotationAngle:(0,0)}] 
          (0,0) ellipse (0.5cm and 0.2cm);
        
        \draw[palestinianGreen, thick, fill=palestinianGreen, opacity=0.5, rotate around={\rotationAngle:(0.5,0)}] 
          (0.62,-0.25) ellipse (0.95cm and 0.49cm);

        \draw[thick] (-1,-2.5) -- (1.5,2.5) node[above right] {$l$};

        \node[below left] at (-1.5,2) {$K$};
        \node[below left] at (3,0.8) {\textcolor{palestinianGreen}{$\frac{1-\delta}{2}K$}};
        \node[below left] at (-0.5,-0.6) {\textcolor{palestinianRed}{$\delta K$}};

        \end{tikzpicture}

    \end{minipage}
    \caption{The hyperplane $l$.}
    \label{fig: hyperplane_l} 
\end{figure}

To see that the image of $f_{\delta}$ lies outside $\delta K$, let $l$ be the supporting hyperplane $\langle a,x\rangle=b$ of $\delta K$, such that $\langle a,\delta h_K(e_1)e_1\rangle=b$. i.e. see Figure \ref{fig: hyperplane_l}. Notice that, similar to \eqref{eq: y}, any point $y$ in $f_{\delta}\left(\frac{1-\delta}{2}K\right)$, can be written as
\begin{equation}\label{eq: y}
y=\frac{1-\delta}{2\delta}x+\frac{1+\delta}{2}h_K(e_1)e_1,
\end{equation}
for some $x\in \delta K$. So,
\begin{equation*}
\begin{split}
\langle a,y\rangle &=\bigg\langle a,\frac{1-\delta}{2\delta}x+\frac{1+\delta}{2}h_K(e_1)e_1\bigg\rangle,\\
&=\frac{1-\delta}{2\delta}\langle a,x\rangle+\frac{1+\delta}{2\delta}b,\\
&\geq -\frac{1-\delta}{2\delta}b+\frac{1+\delta}{2\delta}b,\\
&\geq b,
\end{split}
\end{equation*}
since, given that $K$ is centrally symmetric, we have that $\langle a,x\rangle=-b$ is also a supporting hyperplane, i.e. $\delta K$ is contained in $\langle a,x\rangle\geq -b$.
This concludes the claim on $f_{\delta}$ being well-defined.

This claim implies then, that $\left(\frac{1-\delta}{2}\right)K\times_L K^{\circ}\cong \sqrt{\frac{1-\delta}{2}}(K\times_L K^{\circ})$ symplectically embeds into $K\setminus \delta K\times_L K^{\circ}.$ Therefore, from monotonicity of symplectic capacities, we have that
$$c(K\setminus \delta K \times_L K^{\circ})\geq \frac{1-\delta}{2}c(K\times_L K^{\circ}),$$
as claimed.

\end{proof}

\begin{remark}
In \cite{vicente2025strongviterboconjecturevarious}, the author studies another kind of dual Lagrangian products, arising from $n$-tuples of Young functions and its induced Luxembourg metric. In this case the region $K\subset \mathbb{R}^n$ is also convex and centrally symmetric. Furthermore, a statement as the one in Theorem \ref{thm: Gr_holed_K} also holds. It would be interesting to see, whether for this class of dual Lagrangian products, results analogous to those of Theorems \ref{thm: puncture_higherdim} and \ref{thm: punctured_cube} hold and Lagrangian barriers exist.   
\end{remark} 

\begin{prop}\label{prop: upper_bound_bidisk}
For any normalized symplectic capacity we have that
$$c\left(D^n(1)\setminus D^n(\delta)\times_L D^n(1)\right)\leq 2(1-\delta).$$
\end{prop}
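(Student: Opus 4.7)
The plan is to construct a symplectic embedding of $\Omega := D^n(1)\setminus D^n(\delta)\times_L D^n(1)$ into the symplectic cylinder $Z^{2n}(2(1-\delta)+\epsilon)$ for every $\epsilon>0$. Since any normalized capacity $c$ satisfies $c(Z^{2n}(A))=A$ as a consequence of Non-Squeezing, monotonicity together with $\epsilon\to 0$ will yield $c(\Omega)\leq 2(1-\delta)$.

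First I would pass to generalized polar coordinates in the $q$-variable: the cotangent lift of the diffeomorphism $q\mapsto(|q|,q/|q|)$ identifies $T^*(\mathbb{R}^n\setminus\{0\})$ symplectically with $T^*\mathbb{R}^+\times T^*S^{n-1}$, and in the resulting coordinates $(r,u,p_r,p_u)$ the domain $\Omega$ is characterized by $\delta\leq r\leq 1$ and $p_r^2+|p_u|_u^2/r^2\leq 1$. The decisive observation is that the projection of $\Omega$ onto the radial $(r,p_r)$-plane is contained in the rectangle $[\delta,1]\times[-1,1]$, whose area is exactly $2(1-\delta)$. I would then build the target embedding from two symplectic factors: on the radial side, the two-dimensional classification of symplectic forms (Moser's trick, since symplectic equals area-preserving in dimension two) yields a symplectic embedding of the rectangle into the open disk $B^2(2(1-\delta)+\epsilon)$; on the angular side, one needs to symplectically embed the bounded piece of $T^*S^{n-1}$ into $\mathbb{R}^{2n-2}$. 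For $n=2$ this is explicit: a wrapping map of the form $(\theta,p_\theta)\mapsto(\sqrt{K-2p_\theta}\cos\theta,\sqrt{K-2p_\theta}\sin\theta)$ with $K>2$ sends $\{|p_\theta|\leq 1\}\subset T^*S^1$ symplectically onto an annulus in $\mathbb{R}^2$, exactly in the spirit of the construction in the proof of Theorem \ref{thm: NonSqueezing}. The product of the two maps yields $\Omega\hookrightarrow Z^4(2(1-\delta)+\epsilon)$, settling the bidisk case.

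The main obstacle arises for $n\geq 3$: by the Gromov/Audin theorem, there is no closed exact Lagrangian $S^{n-1}$ inside $\mathbb{R}^{2n-2}$, so the unit disk cotangent bundle of $S^{n-1}$ (which contains the zero section) does not symplectically embed into $\mathbb{R}^{2n-2}$ in the naive product manner. I would circumvent this by fixing a pole $N\in S^{n-1}$ and passing to $\widetilde\Omega:=\Omega\cap\{q/|q|\neq N\}$, whose complement has codimension $n-1\geq 2$ in $\Omega$; on $\widetilde\Omega$ the cotangent lift of the stereographic projection $S^{n-1}\setminus\{N\}\cong \mathbb{R}^{n-1}$ identifies the angular factor symplectically with an open subset of $\mathbb{R}^{2n-2}$, producing a symplectic embedding $\widetilde\Omega\hookrightarrow Z^{2n}(2(1-\delta)+\epsilon)$. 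The most delicate remaining step is to deduce $c(\Omega)\leq c(\widetilde\Omega)$ from the insensitivity of normalized symplectic capacities to removal of a closed codimension-$\geq 2$ subset; this typically follows from inner regularity of $c$ together with a Hamiltonian perturbation argument that displaces compact subsets of $\Omega$ off the removed locus.
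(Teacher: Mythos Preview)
For $n=2$ your argument is essentially the paper's: both pass to polar coordinates in the $q$-plane, wrap the $(\theta,p_\theta)$ factor into $B^2(4)$ exactly as in the proof of Theorem~\ref{thm: NonSqueezing}, observe that the $(r,p_r)$ projection lands in the rectangle $[\delta,1]\times[-1,1]$ of area $2(1-\delta)$, and conclude via an embedding into $Z^{4}(2(1-\delta)+\varepsilon)$.

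For $n\geq 3$ the two routes diverge. The paper does \emph{not} pass to full spherical coordinates: it takes polar coordinates only in the $(x_1,x_2)$-subspace, so the angular factor is still $T^*S^1$ (embeddable in $\mathbb{R}^2$ by the same wrapping map) and the remaining pairs $(x_i,y_i)$ for $i\geq 3$ are carried straight to the $\mathbb{R}^{2n-4}$ factor of the target cylinder; the Lagrangian $S^{n-1}$ obstruction is simply never encountered. Your route, by contrast, runs head-on into that obstruction, and the workaround you propose has a genuine gap. The claim that every normalized capacity satisfies $c(\Omega)\leq c(\widetilde\Omega)$ after deleting a closed codimension $\geq 2$ subset is not a theorem: inner regularity is not one of the capacity axioms, and the ``Hamiltonian displacement'' heuristic does not apply to full-dimensional compacta $K\subset\Omega$ that contain a neighborhood of a point on the removed ray---such $K$ cannot be pushed off the ray by a symplectomorphism of $\Omega$. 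Concretely, for the cylinder capacity $c^{Z}(X):=\inf\{A:X\hookrightarrow Z^{2n}(A)\}$, which is normalized, your final step $c^{Z}(\Omega)\leq c^{Z}(\widetilde\Omega)$ is literally the embedding statement $\Omega\hookrightarrow Z^{2n}(2(1-\delta)+\varepsilon)$ you set out to prove, so for that $c$ the argument is circular.
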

\begin{proof}
Consider the coordinates $(r,\theta,x_3,\ldots,x_n,p_r,p_{\theta},y_3,\ldots,y_n)$ in $T^*\mathbb{R}^n\cong \mathbb{R}^n\times_L \mathbb{R}^n$ obtained by using polar coordinates in the subspace $T^*\mathbb{R}^2$ with coordinates $(x_1,x_2,y_1,y_2)$. Observe that we have the relation 
$$y_1^2+\ldots+y_n^2=p_r^2+\frac{p_{\theta}^2}{r^2}+y_{3}^2+\ldots+y_n^2,$$
and in these coordinates the standard symplectic form reads 
$$dp_r\wedge dr+dp_{\theta}\wedge d\theta+dy_3\wedge dx_3+\ldots+dy_n\wedge dx_n.$$
Similar to the proof of Theorem \ref{thm: NonSqueezing} consider the symplectic embedding $f:D^n(1)\setminus D^n(\delta)\times_L D^n(1)\rightarrow B^2(4)\times \mathbb{R}^{2n-2}$ given by
$$f(r,\theta,x_3,\ldots,x_n,p_r,p_{\theta},y_3,\ldots,y_n)=\left(1+p_{\theta},-\frac{\theta}{1+p_{\theta}},r,p_r,x_3,y_3,\ldots,x_n,y_n\right),$$
whose image is contained in the set
$$W:=\left(B^2(4)\setminus\{(0,0)\}\right)\times \{(x,y)\in \mathbb{R}^2\mid \delta\leq x<1,|y|<1\}\times \mathbb{R}^{2n-4}.$$
For the upper bound, again using area-preserving diffeomorphisms, we can find a symplectic embedding of $W$ into $B^2(4)\times B^2(2(1-\delta)+\varepsilon)\times \mathbb{R}^{2n-4}\subset \mathbb{R}^{2}\times B^2(2(1-\delta)+\varepsilon)\times \mathbb{R}^{2n-4}$, for any $\varepsilon>0$. Composing $f$ with this map, we get a symplectic embedding of $D^n(1)\setminus D^n(\delta)\times_L D^n(1)$ into $B^2(2(1-\delta)+\varepsilon)\times \mathbb{R}^{2n-4}$, for every $\varepsilon>0$ small enough. Using Gromov's Non-Squeezing Theorem we have that 
$$c\left(D^n(1)\setminus D^n(\delta)\times_L D^n(1)\right)\leq2(1-\delta),$$
as claimed.
\end{proof}

%

\begin{proof}[Proof of Theorem \ref{thm: puncture_higherdim}]
The proof of the first statement follows trivially from the statement in Proposition \ref{prop: upper_bound_bidisk}, Theorem  \ref{thm: Gr_holed_K} for $\delta=0$ and Example \ref{ex: D^2xD^2}. For the claim on the minimal action, it's not hard to see that there is indeed\footnote{there is, as a matter of fact, a $S^1$-family of those, invariant by rotation in $\mathbb{R}^2$.} a 2-bouncing billiard trajectory of action $2(1-\delta)$, namely, the one bouncing from the point $(1,0,\dots,0)$ to the point $(1-\delta,0,\dots,0)$. We make the following claim.

\textbf{Claim:} The 2-bouncing trajectory above minimizes action among all the closed orbits. 
\begin{proof}[Proof of Claim]
Let $l_1,\ldots,l_k$ be arcs in the circle of radius 1 representing a closed trajectory of the billiard map in the disk. Let $\alpha_1,\ldots,\alpha_k$ be the angles opposite to the arcs $l_1,\ldots,l_k$ in the triangle formed by these arcs and two radial lines. It is not hard to see that $\textup{length}(l_i)\leq 2\sqrt{1-\delta^2}$ for all $i=1,\ldots,k$, given that $\l_i\subset D^n(1)\setminus D^n(\delta)$. Furthermore, using the cosine law, we can see that $\textup{length}(l_i)=\sqrt{2(1-\cos \alpha_i)}$ for all $i=1,\ldots,k$. We now just need to prove the inequality
$$\sum_{i=1}^{k}\textup{length}(l_i)\geq 2(1-\delta).$$
For this, let $f:(0,\pi)^k\to \mathbb{R}_{\geq 0}$ be given by:
$$f(\alpha_1,\ldots,\alpha_k)=\sum_{i=1}^{k}\sqrt{2(1-\cos \alpha_i)}.$$
We minimize $f$ using Lagrangian multipliers as follows, define $L:(0,\pi)^k\times \mathbb{R}^k\times \mathbb{R}^k\to \mathbb{R}$ as
$$L(\alpha_1,\ldots,\alpha_k,\lambda_1,\dots,\lambda_k,b_1,\dots,b_k)=f(\alpha_1,\ldots,\alpha_k)-\sum_{i=1}^k\lambda_i(\cos \alpha_i-2\delta^2+1-b_i^2).$$
Then 
$$\nabla L=\left(\ldots,\frac{\partial f}{\partial \alpha_i}+\lambda_i\sin \alpha_i,\ldots, \cos \alpha_i-2\delta^2+1-b_i^2,\ldots,2b_i\lambda_i, \ldots\right),$$
and making $\nabla L=0$ we see the following cases;
\begin{itemize}
\item if exists $i$ such that $\lambda_i=0$, then $\frac{\partial f}{\partial \alpha_i}=\frac{\sin \alpha_i}{\sqrt{2(1-\cos\alpha_i)}}=0$ which implies that $\alpha_i=m\pi$ for some interger $m$, and this is not possible.
\item  then for all $i=1,\dots,k$ we must have that $b_i=0$, $\cos \alpha_i=2\delta^2-1$ which implies that $\cos \alpha_j=2\delta^2-1$ for all $j=1,\ldots,k$ because billiard trajectories move along caustics\footnote{A caustic is a closed curve tangent to all the arcs of a given billiard trajectory, see \cite{Tabachnikov2005GeometryAB} for a proof of the fact that circles are caustics for the billiards on the round table.}, hence $b_j=0$ for all $j=1,\ldots,k$. This implies that 
$$\frac{\partial f}{\partial \alpha_i}+\lambda_i\sin \alpha_i=\frac{\sin \alpha_i}{\sqrt{2(1-\cos\alpha_i)}}+\lambda_i\sin \alpha_i=\sin\alpha_i\left(\lambda_i+\frac{1}{\sqrt{2(1-\cos\alpha_i)}}\right)=0,$$ 
and from this we get that, either $\alpha_i=m\pi$ for some integer $m$ or $\cos\alpha_i=1+\frac{1}{2\lambda_i^2}$, the former we already saw is not possible from the first item, while for the latter, substituing into the relation $\cos \alpha_i=2\delta^2-1$ we get that, $\lambda_i=\frac{1}{\sqrt{2\sqrt{\delta^2-1}}}$. From this we get that $(\alpha_1,\dots,\alpha_k)$ is a critical point of $f$ if, and only if, $\cos \alpha_i=2\delta^2-1$ for all $i=1,\dots,k$. For $(\alpha_1,\dots,\alpha_k)$ a critical point of $f$, we have that 
$$f(\alpha_1,\dots,\alpha_k)=2k\sqrt{1-\delta^2}\geq 2(1-\delta),$$
given that $\delta<1$ and $k\geq 2$. Therefore, the claim follows.
\end{itemize}
\end{proof}
This concludes the proof of Theorem \ref{thm: puncture_higherdim}.
\end{proof}

In order to present the proof of Theorem \ref{thm: punctured_cube}. We show first the following Proposition.

\begin{prop}\label{prop: upper_bound_cube}
For any normalized symplectic capacity we have that
$$c\left(\square^n(1,\dots,1)\setminus\{(0,\ldots,0)\}\times_L \diamond^n(1)\right)\leq2.$$
\end{prop}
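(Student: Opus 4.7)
The plan is to reduce the bound to Biran's Lagrangian barrier theorem in $B^{2n}$, via Ramos' symplectomorphism from Example~\ref{ex: cube_x_diamond}.

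First, I would apply Ramos' symplectomorphism $\Psi:\square^n(1,\ldots,1)\times_L \diamond^n(1)\to B^{2n}(4)$. By restriction, this gives a symplectomorphism
\begin{equation*}
\square^n(1,\ldots,1)\setminus\{(0,\ldots,0)\}\times_L \diamond^n(1)\ \cong\ B^{2n}(4)\setminus L,
\end{equation*}
where $L:=\Psi\bigl(\{(0,\ldots,0)\}\times_L \diamond^n(1)\bigr)\subset B^{2n}(4)$ is a Lagrangian $n$-disk.

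The next step, which is the heart of the argument, is to verify that $L$ can be identified, up to a linear symplectomorphism of $B^{2n}(4)$, with the standard real Lagrangian disk $\mathcal{L}\cap B^{2n}(4)=\{(\mathbf{x},\mathbf{y})\in \mathbb{R}^{2n}\mid \mathbf{y}=\mathbf{0},\ \|\mathbf{x}\|^2\leq 4/\pi\}$. Granted this, Biran's barrier theorem, in the strong form $c(B^{2n}(r)\setminus \mathcal{L})\leq r/2$ valid for every normalized symplectic capacity (the form already used in the proof of Theorem~\ref{thm: estimative_2}), applied with $r=4$ gives
\begin{equation*}
c\bigl(B^{2n}(4)\setminus \mathcal{L}\bigr)\leq 2,
\end{equation*}
and composing with $\Psi$ and using monotonicity yields the desired upper bound.

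The main obstacle is the explicit identification $L\cong \mathcal{L}\cap B^{2n}(4)$: Ramos' symplectomorphism in \cite{Ramos2017OnTR} is built from a piecewise/toric construction rather than a closed formula, so tracking exactly where the Lagrangian slice $\{\mathbf{0}\}\times_L \diamond^n(1)$ above the centre of the cube lands requires unpacking that construction carefully. Heuristically, $\{\mathbf{0}\}\times_L \diamond^n(1)$ is the most symmetric Lagrangian fibre of the natural Lagrangian fibration $(\mathbf{x},\mathbf{y})\mapsto \mathbf{x}$ on $\square^n\times_L \diamond^n$, and under a centre-preserving symplectomorphism to the ball it should be sent to the most symmetric Lagrangian disk inside $B^{2n}(4)$, namely $\mathcal{L}\cap B^{2n}(4)$; making this heuristic rigorous from \cite{Ramos2017OnTR} is the one non-routine step, after which the sandwich into Biran's theorem is immediate.
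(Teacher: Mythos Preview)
Your overall strategy---reduce to Biran's barrier $c\bigl(B^{2n}(4)\setminus\mathcal{L}\bigr)\leq 2$ via a symplectic identification carrying the central fibre $\{\mathbf{0}\}\times_L\diamond^n(1)$ onto $\mathcal{L}$---is exactly the paper's. The gap you yourself flag is, however, genuine and not easily closed along the line you suggest. Ramos' symplectomorphism in \cite{Ramos2017OnTR} is assembled via an intermediate toric domain and an integrable-systems construction; nothing in it forces the image of $\{\mathbf{0}\}\times_L\diamond^n(1)$ to be a \emph{linear} Lagrangian disk through the origin of $B^{2n}(4)$, which is what you would need before your $U(n)$-rotation step can even be applied. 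The ``most symmetric fibre goes to most symmetric Lagrangian'' heuristic has no rigorous content here, since the intermediate steps of the construction do not respect the relevant symmetries.

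The paper sidesteps this entirely by discarding Ramos' map and instead building, for each $\lambda<1$, an explicit embedding $\phi_\lambda:\lambda\bigl(\square^n(1,\dots,1)\times_L\diamond^n(1)\bigr)\hookrightarrow B^{2n}(4)$ as (the inverse of) a product $\sigma\times\cdots\times\sigma$ of two-dimensional area-preserving maps $\sigma:B^2(4)\to(-1,1)\times\bigl(-1-\tfrac{\varepsilon}{n},1+\tfrac{\varepsilon}{n}\bigr)$, following the idea of \cite{Latschev2011TheGW}, with the extra hand-imposed condition that $x(\sigma(z))=0$ whenever $x(z)=0$. This axis-preservation guarantees \emph{by construction} that $\phi_\lambda$ sends $\{\mathbf{0}\}\times_L\lambda\diamond^n(1)$ into $\mathcal{L}$; Biran then gives $c\leq 2/\lambda$, and one lets $\lambda\to 1$. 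The missing idea in your proposal is thus: rather than trying to track a Lagrangian through an abstract symplectomorphism whose explicit form you do not control, build an embedding from scratch so that the Lagrangian goes where you want from the outset.
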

\begin{proof}
We make the following precise claim: \textit{
For any $0<\lambda<1$, there exists a symplectic embedding $\phi_{\lambda}$ from $\lambda\left(\square^n(1,\dots,1)\times_L \diamond^n(1)\right)$ into $B^{2n}(4)$, satisfying that $\phi_{\lambda}(0,\ldots,0,y_1,\ldots,y_n)\in \mathcal{L}$, for all $(y_1,\ldots,y_n)\in \lambda \diamond^n(1)$, where $\mathcal{L}:=\{(\mathbf{x},\mathbf{y})\in T^*\mathbb{R}^n\mid \mathbf{x}=0, ||\mathbf{y}||\leq 1\}$.} We then have that 
$$c\left(\square^n(1,\dots,1)\setminus\{(0,\ldots,0)\}\times_L \diamond^n(1)\right)\leq \frac{1}{\lambda}c(B^{2n}(4)\setminus \mathcal{L})\leq \frac{2}{\lambda}, \hspace{5mm} \forall \lambda,$$
by direct application of Biran's result, mentioned in Section \ref{sec: intro} and the monotonicity of symplectic capacities. Thus,
$$c\left(\square^n(1,\dots,1)\setminus\{(0,\ldots,0)\}\times_L \diamond^n(1)\right)\leq2.$$

\begin{proof}[Proof of claim]
To prove our claim we follow an idea in \cite{Latschev2011TheGW}, with some modifications. Let $0<\varepsilon<1/\lambda-1$, choose an area preserving embedding 
$$\sigma:B^2(4)\to (-1,1) \times\left(-1-\frac{\varepsilon}{n},1+\frac{\varepsilon}{n}\right),$$
such that:
\begin{enumerate}
\item \label{eq: ineq} $\frac{1}{4}\pi|z|^2\leq |y(\sigma(z))|<\frac{1}{4}\pi|z|^2+\frac{\varepsilon}{n}, \hspace{5mm} \forall z\in B^2(4),$
\item $x(\sigma(z))=0$, whenever $x(z)=0$,
\item \label{it: third} $(\lambda,\lambda)\times \left(-\lambda-\frac{\lambda\varepsilon}{n},\lambda+\frac{\lambda\varepsilon}{n}\right)\subset \textup{Im }\sigma.$
\end{enumerate}

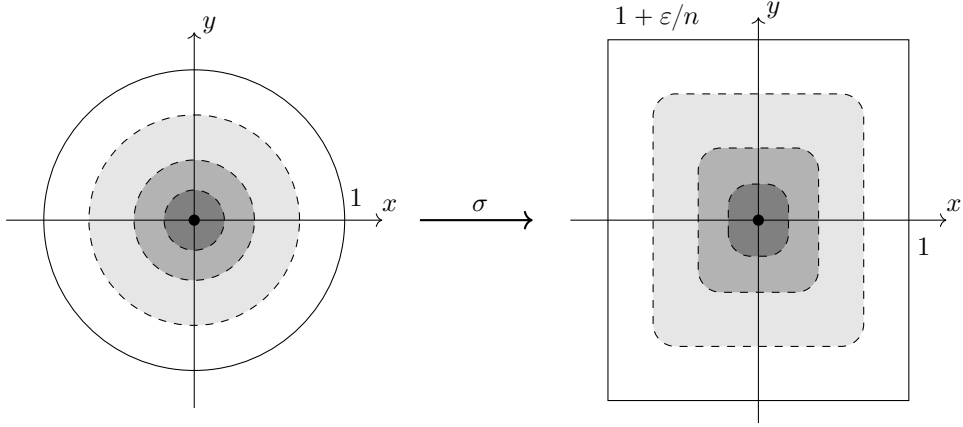
\begin{figure}[htbp] 
    \centering 
\begin{tikzpicture}

\begin{scope}
\foreach \r/\c [count=\i] in {2.0/none, 1.4/gray!20, 0.8/gray!60, 0.4/gray!100} {
  \ifnum\i=1
    \draw (0,0) circle (\r); 
  \else
    \draw[fill=\c, draw=none] (0,0) circle (\r);
    \draw[dashed] (0,0) circle (\r);
  \fi
}
\draw (0,0) node[circle,inner sep=1.5pt,fill=black] {};
\draw[->] (-2.5,0) -- (2.5,0) node at (2.6,0.2) {$x$} node at (2.15,0.3) {$1$};
\draw[->] (0,-2.5) -- (0,2.5) node at (0.2,2.6) {$y$};

\end{scope}

\draw[->,thick] (3,0) -- (4.5,0) node at (3.8,0.2) {$\sigma$};

\begin{scope}[shift={(7.5,0)}]
\foreach \r/\c [count=\i] in {2.0/none, 1.4/gray!20, 0.8/gray!60, 0.4/gray!100} {
  \pgfmathsetmacro{\yfactor}{1.2}
  \pgfmathsetmacro{\ry}{\r*\yfactor}
  \ifnum\i=1
    \draw (-\r,-\ry) rectangle (\r,\ry); 
  \else
    \draw[fill=\c, draw=none, rounded corners=0.3cm] (-\r,-\ry) rectangle (\r,\ry);
    \draw[dashed, rounded corners=0.3cm] (-\r,-\ry) rectangle (\r,\ry);
  \fi
}
\draw (0,0) node[circle,inner sep=1.5pt,fill=black] {};
\draw[->] (-2.5,0) -- (2.5,0) node at (2.6,0.2) {$x$} node at (2.2,-0.35) {$1$};
\draw[->] (0,-2.7) -- (0,2.7) node at (0.2,2.8) {$y$} node at (-1.35,2.7) {$1+\varepsilon/n$}; 
\end{scope}

\end{tikzpicture}
\caption{The map $\sigma_i$.} 
    
\end{figure}

We notice now that the symplectic embedding $\sigma\times\cdots\times \sigma: B^2(4) \times\cdots\times B^2(4) \to \mathbb{R}^{2n}$ maps $B^{2n}(4)$ into $\square^n(1,\ldots,1)\times \diamond^n(1+\varepsilon)$. Indeed, for $(z_1,\ldots,z_n)\in B^{2n}(4)$ we have $\pi(|z_1|^2 + \ldots +|z_n|^2) < 4$. Together with item \eqref{eq: ineq} we can estimate
\begin{equation*}
\begin{split}
\sum_{i=1}^n|y_i\left((\sigma\times\cdots\times \sigma)(z_i,\ldots,z_n)\right)| &=\sum_{i=1}^n|y_i\left(\sigma(z_1)\right)|,\\
&<\frac{\pi}{4}(|z_1|^2+\ldots+|z_n|^2)+\varepsilon,\\
&<1+\varepsilon.
\end{split}
\end{equation*}
Let $\lambda(x_1,\ldots,x_n,y_1,\ldots,y_n)$ with $(x_1,\ldots,x_n,y_1,\ldots,y_n)\in \square^n(1,\dots,1)\times_L \diamond^n(1+\varepsilon)$, we want to show that the image of $\sigma\times\dots\times \sigma$, restricted to $B^{2n}(4)$, contains the point $\lambda(x_1,\ldots,x_n,y_1,\ldots,y_n)$. By item \eqref{it: third}, there exists $(z_1,\ldots,z_n)\in B^2(4)\times \ldots \times B^2(4)$ such that $\sigma(z_i)=\lambda(x_i,y_i)$, for all $i=1,\ldots,n$. Then, item \eqref{eq: ineq} implies that 
\begin{equation*}
\frac{\pi}{4}(|z_1|^2+\ldots+|z_n|^2)\leq \sum_{i=1}^n|y_i\left(\sigma(z_1)\right)|=\lambda \sum_{i=1}^n|y_i|<\lambda(1+\varepsilon)<1,
\end{equation*}
by our choice of $\varepsilon$. Taking $\phi_{\lambda}:=(\sigma\times\cdots\times\sigma)^{-1}|_{\lambda\left(\square^n(1,\dots,1)\times_L \diamond^n(1)\right)}$, the claim follows.
\end{proof}
This concludes the proof.
\end{proof}

We can now finally present the proof of Theorem \ref{thm: punctured_cube}
\begin{proof}[Proof of Theorem \ref{thm: punctured_cube}]
The proof of the first statement follows trivially from Proposition \ref{prop: upper_bound_cube} and Theorem \ref{thm: Gr_holed_K} for $n=2$ and $\delta=0$. For the statement about the minimal action, notice that, using Example \ref{ex: cube_x_diamond}, we have that all the orbits are closed and have the same period equal 4, except exactly 4 orbits (whic are 2-bouncing billiard trajectories), bouncing between the origin and each of the vertices of $\square^2(1,1)$, along a ``half-diagonal". These orbits are easily seen to have action 2.
\end{proof}

\begin{proof}[Proof of Theorem \ref{thm: estimatives}]
For (\ref{item: 1_estimatives}) just notice that 
$$ D^n(\widecheck{k})\times_L D^n\left(\frac{1}{\widehat{k}}\right)\subseteq K\times_L K^{\circ}\subseteq D^n(\widehat{k})\times_L D^n\left(\frac{1}{\widecheck{k}}\right).$$
The claim then follows by a resul in \cite{Ramos2015SymplecticEA} and the monotonicity of symplectic capacities. Similarly, for (\ref{item: 2_estimatives}), we have that 
$$D^n(\widecheck{k})\setminus\{(0,0)\}\times_L D^n\left(\frac{1}{\widehat{k}}\right)\subseteq K\setminus\{(0,0)\}\times_L K^{\circ}\subseteq D^n(\widehat{k})\setminus\{(0,0)\}\times_L D^n\left(\frac{1}{\widecheck{k}}\right).$$
Then, by Theorem \ref{thm: puncture_higherdim}, we have that 
$$2\sqrt{\frac{\widecheck{k}}{\widehat{k}}}\leq c(K\setminus \{(0,0)\}\times_L K^{\circ})\leq 2\sqrt{\frac{\widehat{k}}{\widecheck{k}}},$$
as claimed.\\
For the second part of the claim, notice that 
$$\pi\frac{\widecheck{k}}{\widehat{k}}\leq \sqrt{\textup{Vol}(K)\textup{Vol}(K^{\circ})}\leq \pi\frac{\widehat{k}}{\widecheck{k}}.$$
In light of the pinching condition, we have that $\left(\frac{\widehat{k}}{\widecheck{k}}\right)^3\leq \frac{16}{\pi^2}<\frac{\pi^2}{4}$, so by item (\ref{item: 1_estimatives}) we get the inequality on the right in \eqref{eq: inequality_est}, whereas by item (\ref{item: 2_estimatives}) we get the inequality on the left side of \eqref{eq: inequality_est}.
\end{proof}

We now include the next result in order to show that Lagrangian barriers can also easily be found in other Lagrangian products that are not dual Lagrangian products.

\begin{theorem}\label{thm: disk_times_square}
Let $c$ be any normalized symplectic capacity, then the Lagrangian product $D^2(1)\times_L \square(1,1)$ satisfies that
$$c(D^2(1)\times_L \square(1,1))=4, \hspace{5mm} c(D^2(1)\setminus\{(0,0)\}\times_L \square(1,1))=2.$$
\end{theorem}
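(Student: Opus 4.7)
The theorem has two equalities, which I establish by matching upper and lower bounds.

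For $c(D^2(1)\times_L \square(1,1))=4$, the lower bound uses the inclusion $\diamond^2(1)\subseteq D^2(1)$ combined with the swap $(x,y)\mapsto(y,-x)$ and Example \ref{ex: cube_x_diamond}: $\diamond^2(1)\times_L \square(1,1)\cong \square(1,1)\times_L \diamond^2(1)\cong B^4(4)$, so $B^4(4)$ symplectically embeds into $D^2(1)\times_L \square(1,1)$, giving $c\geq 4$ by monotonicity. For the upper bound, $D^2(1)\subseteq \square(1,1)$ gives $D^2(1)\times_L \square(1,1)\subseteq (-1,1)^4$. Since $(-1,1)^2$ has area $4$, it symplectically embeds into $B^2(4+\varepsilon)$ for any $\varepsilon>0$, and taking the product with the identity on the second $\mathbb{R}^2$-factor embeds the $4$-cube into $Z^4(4+\varepsilon)$; Non-Squeezing yields $c\leq 4+\varepsilon$, and sending $\varepsilon\to 0$ gives $c\leq 4$.

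For $c(D^2(1)\setminus\{(0,0)\}\times_L \square(1,1))=2$, the lower bound comes from applying Theorem \ref{thm: Gr_holed_K} to $K=\diamond^2(1)$ with $\delta=0$ (noting $K$ is convex, centrally symmetric, with $K^\circ=\square(1,1)$), giving $c(\diamond^2(1)\setminus\{(0,0)\}\times_L \square(1,1))\geq 2$; monotonicity through $\diamond^2(1)\setminus\{0\}\subseteq D^2(1)\setminus\{0\}$ passes the bound to the region in the theorem.

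The matching upper bound $c(D^2(1)\setminus\{(0,0)\}\times_L \square(1,1))\leq 2$ is the delicate point and the main obstacle. A straightforward adaptation of the inversion-of-Ramos strategy from Proposition \ref{prop: upper_bound_cube} is obstructed by a volume count: $\textup{Vol}(D^2(1)\times_L \square(1,1))=4\pi$ exceeds $\textup{Vol}(B^4(4))=8$, so for $\lambda$ close to $1$ the rescaled region $\lambda(D^2(1)\times_L \square(1,1))$ does not symplectically embed into $B^4(4)$, and this approach yields only $c\leq \sqrt{2\pi}>2$. Likewise, the coarser inclusions $\square(1,1)\subseteq D^2(\sqrt{2})$ and $D^2(1)\subseteq \diamond^2(\sqrt{2})$ combined with Theorem \ref{thm: puncture_higherdim} or Biran's ball estimate yield only $c\leq 2\sqrt{2}$. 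To obtain the sharp bound, my plan is to adapt Biran's Lagrangian barrier argument directly in $D^2(1)\times_L \square(1,1)$: the fiber $\{(0,0)\}\times_L \square(1,1)$ sits inside the sub-region $\diamond^2(1)\times_L \square(1,1)\cong B^4(4)$ as the standard Lagrangian $\mathcal{L}$, and—leveraging that the enveloping region $D^2(1)\times_L \square(1,1)$ still has total capacity $4$ (by Part~1)—Gromov's pseudo-holomorphic curve machinery of Biran or Sackel should yield that any symplectic ball of capacity $>2$ in $D^2(1)\times_L \square(1,1)$ must intersect this Lagrangian, thereby establishing the bound for all normalized capacities.
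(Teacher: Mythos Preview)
Your arguments for the first equality and for the lower bound $c(D^2(1)\setminus\{(0,0)\}\times_L\square(1,1))\geq 2$ are correct and close to the paper's. (For the lower bound the paper instead observes that $\triangle(1-\varepsilon)\times_L\square(1,1)$ sits inside the punctured region and invokes Traynor's trick, but your route through Theorem~\ref{thm: Gr_holed_K} with $K=\diamond^2(1)$ is equally valid.)

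The genuine gap is the upper bound $c(D^2(1)\setminus\{(0,0)\}\times_L\square(1,1))\leq 2$. You correctly identify this as the crux and correctly note that the obvious inclusions and the Proposition~\ref{prop: upper_bound_cube} strategy stall at $2\sqrt{2}$ or $\sqrt{2\pi}$. But what you then offer is a \emph{plan}, not a proof: you propose to adapt Biran's pseudo-holomorphic curve argument to the non-ball domain $D^2(1)\times_L\square(1,1)$ and say it ``should yield'' the result. This is not carried out, and it is far from automatic---Biran's barrier theorem is tied to the specific geometry of $B^{2n}$ (ultimately to that of $\mathbb{CP}^n$), and extending it to an arbitrary domain that merely has capacity $4$ would require substantial new analysis. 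Moreover, even granting such an extension, the conclusion ``any symplectic ball of capacity $>2$ must intersect the Lagrangian'' bounds only the Gromov width of the complement; it does not by itself bound every normalized capacity, which is what the theorem asserts.

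The paper's route is elementary and avoids $J$-curves altogether: it simply reuses the polar-coordinate ``unfolding'' map of Proposition~\ref{prop: upper_bound_bidisk},
\[
(r,\theta,p_r,p_\theta)\ \longmapsto\ \Bigl(1+p_\theta,\ -\tfrac{\theta}{1+p_\theta},\ r,\ p_r\Bigr),
\]
to embed $D^2(1)\setminus\{(0,0)\}\times_L\square(1,1)$ symplectically into $B^2(2+\varepsilon)\times\mathbb{R}^2$, after which Non-Squeezing gives $c\leq 2$ directly for every normalized capacity. This is the same mechanism you used successfully for the upper bound in Part~1, and is the argument you should try to push through rather than reaching for holomorphic-curve machinery.
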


\begin{figure}[htbp] 
    \centering 
\begin{tikzpicture}

\def\r{1} 

\begin{scope}[shift={(0,0)}]
    \draw[->] (-2,0) -- (2,0) node[right] {$x_1$};
    \draw[->] (0,-2) -- (0,2) node[above] {$x_2$};

    \draw[thick] (0,0) circle (1);

    \filldraw[black] (1.5,0) circle (0pt) node[anchor=south] {$(1,0)$};

    \node at (1.2, 1.5) {\large $D^2(1)$};
\end{scope}

\begin{scope}[shift={(6,0)}] 
    \draw[->] (-2,0) -- (2,0) node[right] {$y_1$}; 
    \draw[->] (0,-2) -- (0,2) node[above] {$y_2$}; 

    \draw[thick] 
        (1,1) -- (1,-1) -- (-1,-1) -- (-1,1) -- cycle;

    \filldraw[black] (1,1) circle (0pt) node[anchor=south west] {$(1,1)$};
    \filldraw[black] (1,-1) circle (0pt) node[anchor=north west] {$(1,-1)$};
    \filldraw[black] (-1,-1) circle (0pt) node[anchor=north east] {$(-1,-1)$};
    \filldraw[black] (-1,1) circle (0pt) node[anchor=south east] {$(-1,1)$};

    \node at (1.2, 2) {\large $\square^2(2)$};
\end{scope}

\end{tikzpicture}
\caption{The Lagrangian product $B^2(\pi)\times_L \square^2(2)$.} 
\end{figure}
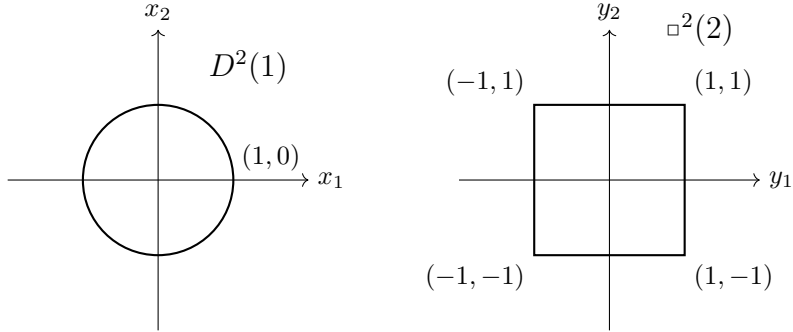

\begin{proof}
Notice that $\diamond^2(1)\times_L\square(1,1)$ is naturally sitting inside $D^2(1)\times_L\square(1,1)$. So, by results in \cite{Ramos2017OnTR}, we have that $c(D^2(1)\times_L\square(1,1))\geq 4$. On the other hand, the existence of the symplectic embedding $f:D^2(1)\times_L\square(1,1)\rightarrow \square(1,1)\times \mathbb{R}^2$ given by $f(x_1,x_2,y_1,y_2)=(x_1,y_1,x_2,y_2)$ and Gromov's Non-Squeezing Theorem imply that $c(D^2(1)\times_L\square(1,1))\leq 4$. This proves the first claim of the theorem.

For the second part, define $\triangle(r):\{(x,y)\in \mathbb{R}^2_{\geq 0}\mid x+y<r\}$. Then, observe that $\triangle(1-\varepsilon)\times_L\square(1,1)$ is naturally sitting inside $D^2(1)\setminus{(0,0)}\times_L\square(1,1)$, for every $\varepsilon>0$ sufficiently small. This implies, by Traynor's trick in \cite{Traynor1995SymplecticPC}, that $c(D^2(1)\setminus {(0,0)}\times_L\square(1,1))\geq 2$. For the upper bound, we can use exactly the same symplectic embedding as in the proof of Proposition \ref{prop: upper_bound_bidisk}, to get an embeeding into $B^2(2+\varepsilon)\times{\mathbb{R}^2}$ for every $\varepsilon>0$ and then by Gromov's Non-Squeezing Theorem we get that $c(D^2(1)\setminus {(0,0)}\times_L\square(1,1))\leq 2$.

\end{proof}

\begin{remark}
    It is easy to see that not every Lagrangian plane of the form $\{\mathbf{x}_0\}\times \mathbb{R}^n$, for $\mathbf{x}_0\in \mathbb{R}^n$, is a Lagrangian barrier. Indeed, for any $r>0$, let $A_{r}\subset\mathbb{R}^2$ be defined as the convex hull of the set of points 
$$\{(0,1),(2,1),(2,-1),(0,-1),(-r,0)\}.$$ 
For the Lagrangian product $A_r\times_L \diamond^2(1)$, we have that 
$$c(A_r\times_L \diamond^2(1))=c(A_r\setminus\{(0,0)\}\times_L \diamond^2(1))=4,$$
for every $r>0$. 

To see this, notice that $\square^2(1-\varepsilon,1-\varepsilon)\times_L\diamond^2(1)$ symplectically embeds into both $A_r\times_L\diamond^2(1)$ and $A_r\setminus \{(0,0)\}\times_L\diamond^2(1)$, for every $\varepsilon>0$ sufficiently small. Such embedding is clearly decribed in Figure \ref{fig: lag_product_embedding}. 

\begin{figure}[htbp] 
    \centering 
\begin{tikzpicture}

\def\r{1} 
\def\eps{0.2} 

\begin{scope}[shift={(0,0)}]
    \draw[->] (-2,0) -- (3,0) node[right] {$x_1$};
    \draw[->] (0,-2) -- (0,2) node[above] {$x_2$};

    \draw[thick] 
        (0,1) -- (2,1) -- (2,-1) -- (0,-1) -- (-\r,0) -- cycle;

    \filldraw[black] (-0.5,1) circle (0pt) node[anchor=south] {$(0,1)$};
    \filldraw[black] (2,1) circle (0pt) node[anchor=south] {$(2,1)$};
    \filldraw[black] (2,-1) circle (0pt) node[anchor=north] {$(2,-1)$};
    \filldraw[black] (-0.6,-1) circle (0pt) node[anchor=north] {$(0,-1)$};
    \filldraw[black] (-\r,0.4) circle (0pt) node[anchor=east] {$(-r,0)$};
    \filldraw[black] (0,0) circle (3pt) node[anchor=west] {};
    \node at (1, 1.5) {\large $A_r$};

    \fill[RoyalBlue!70] 
        (\eps,1-\eps) -- (2-\eps,1-\eps) -- (2-\eps,-1+\eps) -- (\eps,-1+\eps) -- cycle;
    \draw[thick, dashed] 
        (\eps,1-\eps) -- (2-\eps,1-\eps) -- (2-\eps,-1+\eps) -- (\eps,-1+\eps) -- cycle;

\end{scope}

\begin{scope}[shift={(6,0)}] 
    \draw[->] (-2,0) -- (2,0) node[right] {$y_1$};
    \draw[->] (0,-2) -- (0,2) node[above] {$y_2$};

    \fill[RoyalBlue!70] 
        (1,0) -- (0,1) -- (-1,0) -- (0,-1) -- cycle;
    \draw[thick] 
        (1,0) -- (0,1) -- (-1,0) -- (0,-1) -- cycle;

    \filldraw[black] (2,0.4) circle (0pt) node[anchor=east] {$(1,0)$};
    \filldraw[black] (-0.6,1) circle (0pt) node[anchor=south] {$(0,1)$};
    \filldraw[black] (-2,-0.4) circle (0pt) node[anchor=west] {$(-1,0)$};
    \filldraw[black] (0.6,-1) circle (0pt) node[anchor=north] {$(0,-1)$};
    \node at (1, 1.5) {\large $\diamond^2(1)$};
\end{scope}

\end{tikzpicture}
\caption{Embedding of $\square^2(1-\varepsilon,1-\varepsilon)\times_L\diamond^2(1)$ in the Lagrangian product $A_r\times_L \diamond^2(1)$.} 
    \label{fig: lag_product_embedding} 
\end{figure}
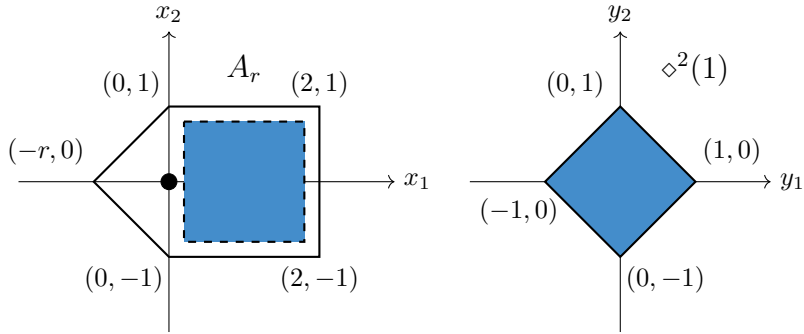

This implies $c(A_r\times_L\diamond^2(1))\geq 4$ and $c(A_r\setminus \{(0,0)\}\times_L\diamond^2(1))\geq 4$. Observe now that the map $f:A_r\times_L\diamond^2(1)\rightarrow \square^2(1,1)\times \mathbb{R}^2$ given by $f(x_1,x_2,y_1,y_2):=(x_2,y_2,x_1,y_2)$ is a symplectic embedding. So, by Gromov's Non-squeezing Theorem we have that $c(A_r\times_L\diamond^2(1))\leq 4$ and hence, by monotonicity of symplectic capacities, $c(A_r\setminus \{(0,0)\}\times_L\diamond^2(1))\leq 4$. 
\end{remark}

\begin{figure}[htbp] 
    \centering 
\begin{tikzpicture}

\def\r{1} 

\begin{scope}[shift={(0,0)}]
    \draw[->] (-2,0) -- (3,0) node[right] {$x_1$};
    \draw[->] (0,-2) -- (0,2) node[above] {$x_2$};

    \draw[thick] 
        (0,1) -- (2,1) -- (2,-1) -- (0,-1) -- (-\r,0) -- cycle;

    \filldraw[black] (-0.5,1) circle (0pt) node[anchor=south] {$(0,1)$};
    \filldraw[black] (2,1) circle (0pt) node[anchor=south] {$(2,1)$};
    \filldraw[black] (2,-1) circle (0pt) node[anchor=north] {$(2,-1)$};
    \filldraw[black] (-0.6,-1) circle (0pt) node[anchor=north] {$(0,-1)$};
    \filldraw[black] (-\r,0.4) circle (0pt) node[anchor=east] {$(-r,0)$};

    \node at (1, 1.5) {\large $A_r$};
\end{scope}

\begin{scope}[shift={(6,0)}] 
    \draw[->] (-2,0) -- (2,0) node[right] {$y_1$}; 
    \draw[->] (0,-2) -- (0,2) node[above] {$y_2$}; 

    \draw[thick] 
        (1,0) -- (0,1) -- (-1,0) -- (0,-1) -- cycle;

    \filldraw[black] (2,0.4) circle (0pt) node[anchor=east] {$(1,0)$};
    \filldraw[black] (-0.6,1) circle (0pt) node[anchor=south] {$(0,1)$};
    \filldraw[black] (-2,-0.4) circle (0pt) node[anchor=west] {$(-1,0)$};
    \filldraw[black] (0.6,-1) circle (0pt) node[anchor=north] {$(0,-1)$};

    \node at (1, 1.5) {\large $\diamond^2(1)$};
\end{scope}

\end{tikzpicture}
\caption{The Lagrangian product $A_r\times_L \diamond^2(1)$.} 
    \label{fig: lag_product} 
\end{figure}
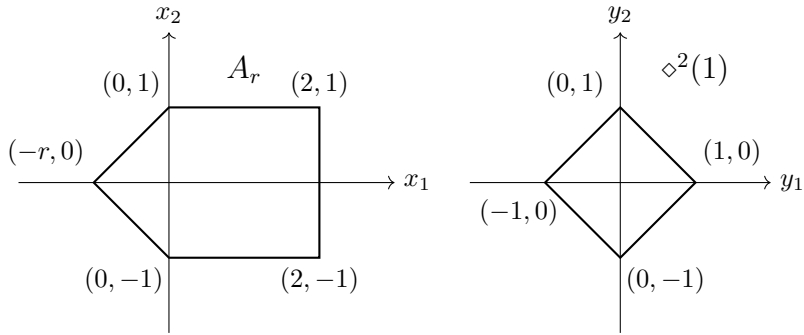

\

\section{The disk cotangent bundle of cylinders}\label{sec: disk_cot}

Let $g:\mathbb{R}\rightarrow \mathbb{R}_{\geq 0}$ given by 
\begin{equation*}
  g(a)= 
\begin{cases}
    \begin{split}
       4a&, \textup{ if } a\leq \pi\\
        4\pi&, \textup{ if } a\geq \pi.
    \end{split}
    \end{cases}
\end{equation*}

\begin{prop}\label{prop: upperbound}
For any normalized symplectic capacity $c$ we have that
$$c(D^*C_a)\leq g(a).$$ 
\end{prop}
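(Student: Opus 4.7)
The plan is to construct a symplectic embedding of $D^*C_a$ into a polydisk of the form $B^2(4\pi)\times B^2(4a+\varepsilon)$ and then invoke Gromov's Non-Squeezing Theorem on whichever factor is smaller. Using the cylindrical coordinates \eqref{eq: cyl_coords} with $R=\pi$, one can identify
$$D^*C_a=\{(\theta,z,p_\theta,p_z):\theta\in S^1,\ |z|<a,\ p_\theta^2+p_z^2<1\},$$
with symplectic form $dp_\theta\wedge d\theta+dp_z\wedge dz$. The key observation is that the disk constraint $p_\theta^2+p_z^2<1$ implies both $|p_\theta|<1$ and $|p_z|<1$, so $D^*C_a$ sits symplectically inside the product
$$\bigl(S^1\times(-1,1)\bigr)_{(\theta,p_\theta)}\times\bigl((-a,a)\times(-1,1)\bigr)_{(z,p_z)},$$
on which the ambient symplectic form is the sum of the area forms on the two factors.

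Next I would symplectically embed each factor separately. The flat annulus $S^1\times(-1,1)$ has total area $4\pi$ and admits a standard symplectic embedding into $B^2(4\pi)\setminus\{0\}$, for instance via the map that in polar coordinates $(r,\theta')$ on the target is given by $r=\sqrt{2(1-p_\theta)}$, $\theta'=\theta$; a short computation shows the pullback of $dy\wedge dx=-r\,dr\wedge d\theta'$ is exactly $dp_\theta\wedge d\theta$. The rectangle $(-a,a)\times(-1,1)$ of area $4a$ embeds symplectically into $B^2(4a+\varepsilon)$ for any $\varepsilon>0$ by Moser's theorem, or equivalently by the explicit rescaling used in \cite{Traynor1995SymplecticPC}. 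Taking the product of these two embeddings yields a symplectic embedding
$$D^*C_a\hookrightarrow B^2(4\pi)\times B^2(4a+\varepsilon).$$

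Finally, this polydisk is contained both in $Z^4(4\pi)=B^2(4\pi)\times\mathbb{R}^2$ and, after a symplectic swap of factors, in $Z^4(4a+\varepsilon)$. Hence by monotonicity of $c$ together with the normalization $c(Z^4(r))=r$, we obtain $c(D^*C_a)\leq\min(4\pi,4a+\varepsilon)$ for every $\varepsilon>0$. Letting $\varepsilon\to 0$ yields $c(D^*C_a)\leq\min(4\pi,4a)=g(a)$, as claimed.

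The main point that needs care is checking the sign conventions in the annulus embedding so that the pullback genuinely agrees with $dp_\theta\wedge d\theta$ on the nose, and confirming that the rectangle-into-ball step (the only place an $\varepsilon$-loss is incurred) really gives an honest symplectic embedding; both are standard. There is no real obstacle, because the coupling constraint $p_\theta^2+p_z^2<1$ inside $D^*C_a$ only removes points from the ambient product and so does not interfere with the product embedding into the polydisk.
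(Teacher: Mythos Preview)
Your proof is correct and follows essentially the same route as the paper: embed $D^*C_a$ symplectically into the polydisk $B^2(4\pi)\times B^2(4a+\varepsilon)$ and then apply Non-Squeezing to whichever factor is smaller. The only cosmetic difference is the formula used for the annulus-to-disk part (you use $r=\sqrt{2(1-p_\theta)},\ \theta'=\theta$, whereas the paper writes $r=1+p_\theta,\ \theta'=-\theta/(1+p_\theta)$), and you pass through the product $(S^1\times(-1,1))\times((-a,a)\times(-1,1))$ before embedding, while the paper writes down one map into $W$ and then invokes area-preserving diffeomorphisms on the second factor; these are equivalent presentations of the same idea.
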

\begin{proof}
As in the proof of Theorem \ref{thm: NonSqueezing} consider the symplectic embedding $f: D^*C_a\rightarrow B^2\left(4\pi\right)\times \mathbb{R}^2$ given by
    $$f(z,\theta,p_z,p_{\theta})=\left(1+p_{\theta},-\frac{1}{1+p_{\theta}}\theta,z,p_z\right),$$
    whose image is contained in the subset 
    $$W:=\{(r,\theta, x,y)\in \mathbb{R}^4\mid 0<r<2,\theta\in [0,2\pi),-a\leq x\leq a, -1\leq y\leq 1 \}\subset B^2\left(4\pi\right)\times \mathbb{R}^2.$$
We focus first on the upper bound. Using area preserving diffeomorphisms, it can be shown that, for any $\varepsilon>0$ there is a symplectic embeddding from $W$ into 
$B^2\left(4\pi\right)\times B^2(4a+\varepsilon)$. Notice now that $B^2\left(4\pi\right)\times B^2(4a+\varepsilon)$ is naturally contained in both $Z^4(4\pi)$ and $Z^4(4a+\varepsilon)$. This implies that, for $a\leq \pi$, $W$ symplectically embeds into $B^2(4a+\varepsilon)\times \mathbb{R}^2$ and for $a>\pi$, $W$ symplectically embeds into $B^2(4\pi)\times \mathbb{R}^2$. Using Gromov's Non-Squeezing Theorem we can see that 
$$c(D^*C_a)\leq g(a).$$        
\end{proof}

\begin{prop}\label{prop: lowerbound}
There exists a symplectic embedding of $B^4(g(a)-2\varepsilon)$ into $D^*C_a$, for every $\varepsilon>0$.
\end{prop}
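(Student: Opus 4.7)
The plan is to exhibit a large Lagrangian bidisk inside $D^*C_a$ and then invoke the optimal ball-into-bidisk embedding that underlies Example~\ref{ex: D^2xD^2}.

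The key geometric observation is that $C_a$, with the metric induced from $\mathbb{R}^3$, is flat: in the coordinates $q=(\cos\theta,\sin\theta,z)$ the metric reads $d\theta^2+dz^2$, so intrinsically $C_a$ is the periodic flat strip $(\mathbb{R}/2\pi\mathbb{Z})\times(-a,a)$. Consequently, for any $r<\min(a,\pi)$, the standard flat disk $D^2(r)\subset\mathbb{R}^2$ isometrically embeds into $C_a$: place a copy centered at $(\theta,z)=(\pi,0)$. The bound $r<\pi$ prevents the disk from wrapping around the $\theta$-circle, while $r<a$ keeps it inside the slab $|z|<a$.

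Every isometric embedding of Riemannian manifolds lifts canonically to a symplectic embedding of the associated unit disk cotangent bundles, so the previous step gives a symplectic embedding $D^*D^2(r)\hookrightarrow D^*C_a$. Now $D^*D^2(r)$ is, by definition, the Lagrangian bidisk $D^2(r)\times_L D^2(1)$. The symplectic scaling $(x,y)\mapsto (x/\sqrt{r},\sqrt{r}\,y)$ identifies this with $D^2(\sqrt{r})\times_L D^2(\sqrt{r})$, which by Example~\ref{ex: D^2xD^2} combined with conformality of symplectic capacities has Gromov width $4r$. Hence for every $\delta>0$ there is a symplectic embedding $B^4(4r-\delta)\hookrightarrow D^2(r)\times_L D^2(1)$, and composing we obtain $B^4(4r-\delta)\hookrightarrow D^*C_a$.

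Finally, given $\varepsilon>0$, set $r=\min(a,\pi)-\varepsilon/4$ and $\delta=\varepsilon$; then $4r-\delta=4\min(a,\pi)-2\varepsilon=g(a)-2\varepsilon$, producing the desired embedding. No step here is a serious obstacle; the only delicate point is the geometric check that an isometrically embedded flat disk of radius $r$ really fits in a fundamental strip of $C_a$, which is exactly what forces $r<\min(a,\pi)$ and dictates the sharp value $g(a)=4\min(a,\pi)$.
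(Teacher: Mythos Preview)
Your proof is correct and reaches the same conclusion as the paper's, but via a different intermediate domain. The paper embeds the ``rectangle times diamond'' $\square^2\bigl(a-\tfrac{\varepsilon}{2},\pi-\tfrac{\varepsilon}{2}\bigr)\times_L\diamond^2(1)$ into $D^*C_a$ using the cylindrical coordinate chart (simply the identity in those coordinates) and then invokes the results of \cite{Ramos2017OnTR} and \cite{Latschev2011TheGW} (cf.\ Example~\ref{ex: cube_x_diamond}) to realize the ball inside that product. You instead observe that the flat cylinder contains an isometric copy of the flat disk $D^2(r)$ for any $r<\min(a,\pi)$, lift this to a symplectic embedding of the Lagrangian bidisk $D^2(r)\times_L D^2(1)$ into $D^*C_a$, and then appeal to Example~\ref{ex: D^2xD^2} (the bidisk has Gromov width~$4$) to realize the ball. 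Both arguments are really the same cylindrical-coordinate trick followed by a known ball-into-Lagrangian-product result; the only difference is which product you pick and, correspondingly, which reference you cite. Your route is perhaps more geometric in flavor (isometric embeddings lifting to cotangent bundles), while the paper's is slightly more direct in coordinates; neither has a real advantage over the other.
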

\begin{proof}
 It is not hard to see there exists a symplectic embedding $h:\square\left(a-\frac{\varepsilon}{2}, \pi-\frac{\varepsilon}{2}\right)\times_L \diamond(1)\rightarrow D^*C_a$, for every $\varepsilon>0$, given by
$$h(x_1,x_2,y_1,y_2):=(x_1,x_2,y_1,y_2),$$
where we took cartesian coordinates in $\square\left(a-\frac{\varepsilon}{2}, \pi-\frac{\varepsilon}{2}\right)\times_L \diamond(1)$ and cylindrical coordinates in the target $D^*C_a$. By results in \cite{Ramos2017OnTR} and \cite{Latschev2011TheGW}, we have there is a symplectic embedding of $B^4\left(2\min(2a-\varepsilon,2\pi-\varepsilon)\right)$ into $D^*C_a$, for every $\varepsilon>0$. Hence, the claim follows.

\end{proof}

\begin{proof}[Proof of Theorem \ref{thm: Gromov_width}]
It follows by putting together Propositions \ref{prop: upperbound} and \ref{prop: lowerbound}.  
\end{proof}

\bibliographystyle{unsrt}
\bibliography{Biblio}

\end{document}